\begin{document}

\newcommand{\mmbox}[1]{\mbox{${#1}$}}
\newcommand{\proj}[1]{\mmbox{{\mathbb P}^{#1}}}
\newcommand{\Cr}{C^r(\Delta)}
\newcommand{\CR}{C^r(\hat\Delta)}
\newcommand{\affine}[1]{\mmbox{{\mathbb A}^{#1}}}
\newcommand{\Ann}[1]{\mmbox{{\rm Ann}({#1})}}
\newcommand{\caps}[3]{\mmbox{{#1}_{#2} \cap \ldots \cap {#1}_{#3}}}
\newcommand{\Proj}{{\mathbb P}}
\newcommand{\N}{{\mathbb N}}
\newcommand{\Z}{{\mathbb Z}}
\newcommand{\R}{{\mathbb R}}
\newcommand{\A}{{\mathcal{A}}}
\newcommand{\Tor}{\mathop{\rm Tor}\nolimits}
\newcommand{\Ext}{\mathop{\rm Ext}\nolimits}
\newcommand{\Hom}{\mathop{\rm Hom}\nolimits}
\newcommand{\im}{\mathop{\rm Im}\nolimits}
\newcommand{\rank}{\mathop{\rm rank}\nolimits}
\newcommand{\supp}{\mathop{\rm supp}\nolimits}
\newcommand{\arrow}[1]{\stackrel{#1}{\longrightarrow}}
\newcommand{\CB}{Cayley-Bacharach}
\newcommand{\coker}{\mathop{\rm coker}\nolimits}
\sloppy
\theoremstyle{plain}

\newtheorem{defn0}{Definition}[section]
\newtheorem{prop0}[defn0]{Proposition}
\newtheorem{quest0}[defn0]{Question}
\newtheorem{thm0}[defn0]{Theorem}
\newtheorem{lem0}[defn0]{Lemma}
\newtheorem{corollary0}[defn0]{Corollary}
\newtheorem{example0}[defn0]{Example}
\newtheorem{remark0}[defn0]{Remark}

\newenvironment{defn}{\begin{defn0}}{\end{defn0}}
\newenvironment{prop}{\begin{prop0}}{\end{prop0}}
\newenvironment{quest}{\begin{quest0}}{\end{quest0}}
\newenvironment{thm}{\begin{thm0}}{\end{thm0}}
\newenvironment{lem}{\begin{lem0}}{\end{lem0}}
\newenvironment{cor}{\begin{corollary0}}{\end{corollary0}}
\newenvironment{exm}{\begin{example0}\rm}{\end{example0}}
\newenvironment{rem}{\begin{remark0}\rm}{\end{remark0}}

\newcommand{\defref}[1]{Definition~\ref{#1}}
\newcommand{\propref}[1]{Proposition~\ref{#1}}
\newcommand{\thmref}[1]{Theorem~\ref{#1}}
\newcommand{\lemref}[1]{Lemma~\ref{#1}}
\newcommand{\corref}[1]{Corollary~\ref{#1}}
\newcommand{\exref}[1]{Example~\ref{#1}}
\newcommand{\secref}[1]{Section~\ref{#1}}
\newcommand{\remref}[1]{Remark~\ref{#1}}
\newcommand{\questref}[1]{Question~\ref{#1}}

\newcommand{\std}{Gr\"{o}bner}
\newcommand{\jq}{J_{Q}}


\title{On ideals generated by fold products of linear forms}
\author{\c{S}tefan O. Toh\v{a}neanu}

\subjclass[2010]{Primary 13D02; Secondary 52C35, 14N20, 13A30} \keywords{linear free resolution, ideals generated by products of linear forms, hyperplane arrangements, Orlik-Terao algebra, special fiber, symmetric ideal. \\ \indent Department of Mathematics, University of Idaho, Moscow, Idaho 83844-1103, USA, Email: tohaneanu@uidaho.edu, Phone: 208-885-6234, Fax: 208-885-5843.}

\begin{abstract}
Let $\mathbb K$ be a field of characteristic 0. Given $n$ linear forms in $R=\mathbb K[x_1,\ldots,x_k]$, with no two proportional, in one of our main results we show that the ideal $I\subset R$ generated by all $(n-2)$-fold products of these linear forms has linear graded free resolution. This result helps determining a complete set of generators of the symmetric ideal of $I$. Via Sylvester forms we can analyze from a different perspective the generators of the presentation ideal of the Orlik-Terao algebra of the second order; this is the algebra generated by the reciprocals of the products of any two (distinct) of the linear forms considered. We also show that when $k=2$, and when the collection of $n$ linear forms may contain proportional linear forms, for any $1\leq a\leq n$, the ideal generated by $a$-fold products of these linear forms has linear graded free resolution.
\end{abstract}
\maketitle

\section{Introduction}

Let $R:=\mathbb K[x_1,\ldots,x_k]$ be the ring of (homogeneous) polynomials with coefficients in $\mathbb K$, a field of characteristic 0, with the natural grading. Denote ${\frak m}:=\langle x_1,\ldots,x_k\rangle$ to be the irrelevant maximal ideal of $R$. Let $\ell_1,\ldots,\ell_n$ be linear forms in $R$, some possibly proportional, and denote this collection by $\Sigma=(\ell_1,\ldots,\ell_n)\subset R$; we need a notation to take into account the fact that some of these linear forms are proportional. For $\ell\in \Sigma$, by $\Sigma\setminus\{\ell\}$ we will understand the collection of linear forms of $\Sigma$ from which $\ell$ has been removed. Also, we denote $|\Sigma|=n$.

Let $1\leq a\leq n$ be an integer and define {\em the ideal generated by $a$-fold products of $\Sigma$} to be the ideal of $R$ $$I_a(\Sigma):=\langle \{\ell_{i_1}\cdots\ell_{i_a}|1\leq i_1<\cdots<i_a\leq n\}\rangle.$$ We also make the convention $I_0(\Sigma):=R$, and $I_b(\Sigma)=0$, for all $b>n$. Also, if $\Sigma=\emptyset$, $I_a(\Sigma)=0$, for any $a\geq 1$.

A homogeneous ideal $I\subset R$ generated in degree $d$ it is said to have {\em linear graded free resolution}, if one has the graded free resolution $$0\rightarrow R^{n_{b+1}}(-(d+b))\rightarrow\cdots\rightarrow R^{n_2}(-(d+1))\rightarrow R^{n_1}(-d)\rightarrow R \rightarrow R/I\rightarrow 0,$$ for some positive integer $b$. By convention, the zero ideal has linear graded free resolution.

Though only recently has been written down (see \cite[Conjecture 1]{AnGaTo}), it has been five years at least since it has been conjectured that for any $\Sigma\subset R$, and any $1\leq a\leq|\Sigma|$, the ideals $I_a(\Sigma)$ have linear graded free resolution.

Without any loss of generality, we can assume that $\langle \Sigma\rangle={\frak m}$; otherwise, after a change of variables we can assume that $\Sigma\subset\mathbb K[x_1,\ldots,x_s], s<k,$ with $\langle\Sigma\rangle=\langle x_1,\ldots,x_s\rangle$. Suppose $|\Sigma|=n$. Here are some instances when it is known that this conjecture is true:
\begin{itemize}
  \item[(1)] Let $\mathcal C_{\Sigma}$ be the linear code with generating matrix having columns dual to the linear forms of $\Sigma$ (in no particular order). This will be a linear code of length $n$, and dimension $k$. Suppose its minimum (Hamming) distance is $d$. Then, by \cite[Theorem 3.1]{To}, for any $1\leq a\leq d$, we have $I_a(\Sigma)={\frak m}^a$, and in these ideals have linear graded free resolution (see for example \cite[Corollary 1.5]{EiGo}).
  \item[(2)] With the same point of view from (1), for some $\Sigma$ with certain properties, in \cite[Theorem 3.1]{AnTo}, it is shown that $I_{d+1}(\Sigma)$ has linear graded free resolution (of course, after using \cite{EiGo}).
  \item[(3)] If any $k$ of the linear forms in $\Sigma$ are linearly independent (i.e., the linear code $\mathcal C_{\Sigma}$ is Maximum Distance Separable code), then $I_a(\Sigma)$ has linear graded free resolution, for any $1\leq a\leq n$. To see this, apply the proof of \cite[Theorem 2.5]{GaTo} and \cite[Proposition 3.5]{AnGaTo}.
  \item[(4)] More generally than part (3), whenever $R/I_a(\Sigma)$ is Cohen-Macaulay, then $I_a(\Sigma)$ has a linear graded free resolution. This can be seen from the discussions at the end of the proof of \cite[Proposition 2.1]{To2}, and immediately after it; the point there is that the Eagon-Northcott complex becomes the desired linear graded free resolution of $R/I_a(\Sigma)$ (see \cite[Theorem A2.60]{Ei}).
  \item[(5)] If no two of the linear forms of $\Sigma$ are proportional, then from part (4), $I_{n-1}(\Sigma)$, so $a=n-1$, has linear graded free resolution.
  \item[(6)] If $\langle \Sigma\rangle=\langle \ell\rangle$, for some $\ell\in R_1$, then for any $1\leq a\leq n$, we have $I_a(\Sigma)=\langle\ell^a\rangle$, which, as any principal ideal, has linear graded free resolution.
  \item[(7)] $I_n(\Sigma)$, so $a=n$, is the principal ideal generated by $\displaystyle \prod_{\ell\in\Sigma}\ell$, hence it has linear graded free resolution.
\end{itemize}

In this article we add to the list above three more nontrivial cases when the conjecture is true:

\begin{itemize}
    \item For any $k\geq 1$ and for any $\Sigma\subset R=\mathbb K[x_1,\ldots,x_k]$, a collection of $n$ linear forms with no two proportional, $I_{n-2}(\Sigma)$, so $a=n-2$, has linear graded free resolution. (see Theorem \ref{theorem1})
    \item If $k=2$, then for any $\Sigma\subset\mathbb K[x_1,x_2]_1$, and for any $1\leq a\leq |\Sigma|$, $I_a(\Sigma)$ has linear graded free resolution. (see Theorem \ref{theorem0})
    \item Generalizing part (5) above, for any $\Sigma\subset R=\mathbb K[x_1,\ldots,x_k]$, $I_{n-1}(\Sigma)$ has linear graded free resolution. (see Section \ref{a=n-1})
\end{itemize}

As applications to the main result Theorem \ref{theorem1}, we find a criterion when $R/I_{n-2}(\Sigma)$ is Cohen-Macaulay (Corollary \ref{CM}), and when $k=3$, we determine a primary decomposition of $I_{n-2}(\Sigma)$ (Proposition \ref{primaryDecomp}).

\medskip

Let $\Sigma=(\ell_1,\ldots,\ell_n)\subset R=\mathbb K[x_1,\ldots,x_k]$ be a collection of linear forms such that $\gcd(\ell_i,\ell_j)=1$, if $i\neq j$. Let $\A$ be the central hyperplane arrangement defined by $\ell_1,\ldots,\ell_n$, i.e., $\A=\{V(\ell_1),\ldots,V(\ell_n)\}\subset\mathbb P^{k-1}$. In such instance, instead of writing $\Sigma=(\ell_1,\ldots,\ell_n)\subset R$, we will write $\A=\{\ell_1,\ldots,\ell_n\}\subset R$. The {\em rank} of $\A$ is ${\rm rank}(\A)={\rm ht}(\langle\ell_1,\ldots,\ell_n\rangle)$; if ${\rm rank}(\A)=k$, then $\A$ is called essential.

In recent years, especially after the work of \cite{Te} and \cite{PrSp}, there has been a lot of focus on the algebra $OT(\A):=\displaystyle\mathbb K\left[\frac{1}{\ell_1},\ldots,\frac{1}{\ell_n}\right]$, called {\em the Orlik-Terao algebra} (after the names of the mathematicians who first introduced it in \cite{OrTe}), or {\em the algebra of the reciprocal plane}. Even more recently, in the hyperplane arrangements community discussions have started in regard to studying $\displaystyle\mathbb K\left[\ldots,\frac{1}{\prod_{i\in I}\ell_i},\ldots\right]$, where $I$ runs over all independent sets of $\A$, of certain given size. Due to \cite[Theorem 2.4]{GaSiTo}, the study of these new algebras can be done by analyzing the special fiber of certain ideals of $R$, generated by products of linear forms. Since for any $1\leq i<j\leq n$, the set $\{i,j\}$ is independent, this is the path we are pursuing for some parts of Section \ref{OT}, where we analyse some of the first properties of the algebra $$OT(2,\A):=\mathbb K\left[\ldots,\frac{1}{\ell_i\ell_j},\ldots\right], 1\leq i<j\leq n,$$ that we are calling {\em the Orlik-Terao algebra of second order of $\A$}.

Propositions \ref{connection} and \ref{properties} show that there is a strong connection between $OT(2,\A)$ and $OT(\A)$, yet despite that the generators of $I(\A)\subset\mathbb K[y_1,\ldots,y_n]$, the presentation ideal of $OT(\A)$, have nice combinatorial description (they are ``boundaries'' of circuits, see \cite{OrTe}), it is not the same for the generators of $I(2,\A)\subset\mathbb K[\ldots,t_{i,j},\ldots]$, the presentation ideal of $OT(2,\A)$. The difficulties occur due to the fact that the elements of $I(2,\A)$ are obtained from the elements of $I(\A)\cap \mathbb K[\ldots,y_iy_j,\ldots]$, and though in theory this looks reasonable, in practice the task to find them is challenging (see, e.g., Example \ref{example1}). By \cite[Theorem 2.4]{GaSiTo}, $OT(2,\A)$ is isomorphic to the special fiber of the ideal $I_{n-2}(\A)\subset R$, so by following the approach of \cite[Proposition 3.5]{GaSiTo}, one can obtain elements in $I(2,\A)$ from the generators of the symmetric ideal of $I_{n-2}(\A)$, via Sylvester forms. But even with this technique, as it is well known in the Ress algebras / elimination theory community, no one guarantees that one obtains all the generators of $I(2,\A)$. We conjecture that we do obtain all of them. None-the-less, our main result (Theorem \ref{theorem1}) helps determine all the generators of the symmetric ideal of $I_{n-2}(\A)$ (see Proposition \ref{gens_sym_ideal}).

\section{Ideals with linear free resolution}\label{linear}

Let $\Sigma=(\ell_1,\ldots,\ell_n)$ be a collection of linear forms in $R:=\mathbb K[x_1,\ldots,x_k]$, some of them, possibly proportional. Let $1\leq a\leq n$ be an integer and consider the ideal of $R$ $$I_a(\Sigma):=\langle \{\ell_{i_1}\cdots\ell_{i_a}|1\leq i_1<\cdots<i_a\leq n\}\rangle.$$

In this section we show that for the cases listed in the Introduction, $I_a(\Sigma)$ has linear graded free resolution. And we also look at some consequences of these results.

\subsection{The case $a=n-1$.} \label{a=n-1} Suppose $\Sigma=(\underbrace{\ell_1,\ldots,\ell_1}_{n_1},\ldots,\underbrace{\ell_s,\ldots,\ell_s}_{n_s})$, with $n_1,\ldots,n_s\geq 1$, and $\gcd(\ell_i,\ell_j)=1$, for any $1\leq i<j\leq s$. Suppose $|\Sigma|=n_1+\cdots+n_s=n$.

Let $\Sigma_0=\{\ell_1,\ldots,\ell_s\}$. It is immediate to observe that $$I_{n-1}(\Sigma)=(\ell_1^{n_1-1}\cdots\ell_s^{n_s-1})I_{s-1}(\Sigma_0).$$ By the proof of \cite[Lemma 3.1(a)]{GaSiTo}, we have the graded free resolution $$0\rightarrow R(-s)^{s-1}\rightarrow R(-(s-1))^s\rightarrow R\rightarrow R/I_{s-1}(\Sigma_0)\rightarrow 0.$$

By Hilbert-Burch Theorem (\cite[Theorem 20.15]{Ei2}), we obtain the linear graded free resolution

$$0\rightarrow R(-n)^{s-1}\rightarrow R(-(n-1))^s\rightarrow R\rightarrow R/I_{n-1}(\Sigma)\rightarrow 0.$$

\subsection{The case $k=2$.} \label{k=2} Suppose $\Sigma=(\ell_1,\ldots,\ell_n)\subset R=\mathbb K[x,y]$. Suppose that the linear form $\ell$ shows up at least twice in $\Sigma$. Let $\Sigma':=\Sigma\setminus\{\ell\}$, and $\Sigma'':=\Sigma'\setminus\{\ell\}$.

\begin{lem} \label{lemma_k=2} Let $1\leq a\leq n$. If
\begin{itemize}
  \item[(i)] $I_{a-1}(\Sigma')$ has linear graded free resolution, and
  \item[(ii)] $I_a(\Sigma'):\ell=I_{a-1}(\Sigma'')$,
\end{itemize} then $I_a(\Sigma):\ell=I_{a-1}(\Sigma')$ and $I_a(\Sigma)$ has linear graded free resolution.
\end{lem}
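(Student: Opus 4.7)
My plan is to reduce the claim to a short exact sequence and then compare Betti tables via the long exact sequence in $\Tor$. I begin by writing the decomposition
\[ I_a(\Sigma) \;=\; I_a(\Sigma'') \;+\; \ell\cdot I_{a-1}(\Sigma'), \]
obtained by sorting each $a$-fold product of $\Sigma$ according to how many of the two distinguished copies of $\ell$ in $\Sigma$ it uses (zero, one, or two). The products using zero copies of $\ell$ give $I_a(\Sigma'')$, while those using one or two combine into $\ell\cdot I_{a-1}(\Sigma'')+\ell^{2}\cdot I_{a-2}(\Sigma'')$, which equals $\ell \cdot I_{a-1}(\Sigma')$ after expanding $I_{a-1}(\Sigma') = I_{a-1}(\Sigma'')+\ell\, I_{a-2}(\Sigma'')$ in the same way. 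In particular $I_a(\Sigma)+(\ell) = I_a(\Sigma'')+(\ell)$.

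Next I would prove $I_a(\Sigma) : \ell = I_{a-1}(\Sigma')$. The inclusion $\supseteq$ is immediate from the decomposition. For $\subseteq$, suppose $\ell g \in I_a(\Sigma)$ and write $\ell g = h + \ell p$ with $h \in I_a(\Sigma'')$ and $p \in I_{a-1}(\Sigma')$. Then $\ell(g - p) = h \in I_a(\Sigma'') \subseteq I_a(\Sigma')$, so $g - p \in I_a(\Sigma') : \ell$, which equals $I_{a-1}(\Sigma'') \subseteq I_{a-1}(\Sigma')$ by hypothesis (ii). Hence $g = p + (g-p) \in I_{a-1}(\Sigma')$.

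For the resolution, I feed the colon computation into the standard short exact sequence
\[ 0 \to \bigl(R/I_{a-1}(\Sigma')\bigr)(-1) \xrightarrow{\;\cdot \ell\;} R/I_a(\Sigma) \to R/\bigl(I_a(\Sigma'')+(\ell)\bigr) \to 0, \]
and use $k=2$ to analyze the right-hand quotient. Since $R/(\ell)\cong \mathbb{K}[t]$ is a principal ideal domain and $I_a(\Sigma'')$ is generated in degree $a$, the image of $I_a(\Sigma'')$ modulo $\ell$ is either $0$ or $(t^{a})$; thus $I_a(\Sigma'')+(\ell)$ is either $(\ell)$ or a complete intersection $(\ell, F)$ with $\deg F = a$, and in either case its Koszul resolution is linear with graded Betti supported in a subset of $(0,0),(1,1),(1,a),(2,a+1)$. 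Assumption (i), shifted by $-1$, endows $\bigl(R/I_{a-1}(\Sigma')\bigr)(-1)$ with a linear resolution whose Betti sit at $(0,1)$ and at $(i,i+a-1)$ for $i\ge 1$. The long exact sequence in $\Tor(-,\mathbb{K})$ then bounds the graded Betti of $R/I_a(\Sigma)$ by the sum of these two tables, leaving as potentially off-linear only the positions $(0,1)$ and $(1,1)$; both vanish a priori, since $R/I_a(\Sigma)$ is cyclic (so $\beta_{0,1}=0$) and $I_a(\Sigma)$ has no generators in degree below $a$ (so $\beta_{1,1}=0$ when $a\ge 2$). Projective dimension at most two follows from $\dim R = 2$, so the resolution of $I_a(\Sigma)$ is linear.

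The main obstacle is the third step: the Koszul contribution $\beta_{1,1}=1$ from $R/\bigl(I_a(\Sigma'')+(\ell)\bigr)$ threatens linearity of the middle term, and is defused only by the elementary degree observation on $I_a(\Sigma)$. The genuinely non-formal content is the colon identity, which is where hypothesis (ii) is invoked, while the reduction of the quotient modulo $\ell$ to a (near-)complete intersection is precisely where the assumption $k=2$ is used.
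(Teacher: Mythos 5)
Your proof is correct and follows essentially the same route as the paper: the colon computation $I_a(\Sigma):\ell=I_{a-1}(\Sigma')$ is argued identically, and you build the same short exact sequence $0\to\bigl(R/I_{a-1}(\Sigma')\bigr)(-1)\to R/I_a(\Sigma)\to R/\bigl(I_a(\Sigma)+(\ell)\bigr)\to 0$, identifying the cokernel as a (near-)complete intersection because $k=2$. The only stylistic difference is in the final step: the paper passes to Castelnuovo--Mumford regularity and applies the regularity bound for short exact sequences together with Eisenbud--Goto, whereas you track graded Betti numbers directly via the long exact sequence in $\Tor$ and rule out the two off-linear positions $(0,1)$ and $(1,1)$ by hand; these two finishing arguments are logically equivalent.
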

\begin{proof} First we show that $I_a(\Sigma):\ell=I_{a-1}(\Sigma')$.

Since we have $I_a(\Sigma)=\ell I_{a-1}(\Sigma')+I_a(\Sigma')$, the inclusion ``$\supseteq$'' is clear.

Let $f\in I_a(\Sigma):\ell$. Then $\ell f=\ell g+h$, for some $g\in I_{a-1}(\Sigma')$ and $h\in I_a(\Sigma')$. Therefore, $\ell(f-g)\in I_a(\Sigma')$, leading to $f-g\in I_a(\Sigma'):\ell$. From condition (ii), we get $f-g\in I_{a-1}(\Sigma'')$. But $\Sigma''\subset\Sigma'$, so $f\in I_{a-1}(\Sigma')$. This gives the other inclusion.

\medskip

Without any loss of generality, suppose $\ell=x$. Then for each $i=1,\ldots,n$, $\ell_i=b_ix+c_iy$, where $b_i,c_i\in \mathbb K$ some of them equal to zero. Suppose that precisely $1\leq s\leq n-2$ of the $c_i$'s are NOT equal to zero; if $s=0$, then $\Sigma=(\underbrace{x,\ldots,x}_{n})$, and part (6) in the Introduction shows that $I_a(\Sigma)$ has linear graded free resolution.

Let $\bar{\Sigma}:=(\underbrace{y,\ldots,y}_{s})$. Then $$\langle I_a(\Sigma),x\rangle=\langle I_a(\bar{\Sigma}),x\rangle.$$ If $a>s$, by convention, $I_a(\bar{\Sigma})=0$.

Everything put together gives the short exact sequence of $R$-modules

$$0\rightarrow R(-1)/I_{a-1}(\Sigma')\stackrel{\cdot x}\rightarrow R/I_a(\Sigma)\rightarrow R/\langle I_a(\bar{\Sigma}),x\rangle \rightarrow 0.$$

Condition (i), together with \cite[Theorem 1.2 (2)]{EiGo}, gives the Castelnuovo-Mumford regularity ${\rm reg}_R(R(-1)/I_{a-1}(\Sigma'))=a-2+1=a-1$.

If $a>s$, then ${\rm reg}_R(R/\langle I_a(\bar{\Sigma}),x\rangle)=0$, and if $a\leq s$, then ${\rm reg}_R(R/\langle I_a(\bar{\Sigma}),x\rangle)=a-1$.

But, from the inequalities of regularities under a short exact sequence (see \cite[Corollary 20.19 b.]{Ei})

$${\rm reg}(R/I_a(\Sigma))\leq \max\{{\rm reg}_R(R(-1)/I_{a-1}(\Sigma')),{\rm reg}_R(R/\langle I_a(\bar{\Sigma}),x\rangle)\}\leq a-1.$$

Since $I_a(\Sigma)$ is generated in degree $a$, we get that ${\rm reg}(R/I_a(\Sigma))=a-1$, and hence, from \cite[Theorem 1.2 (2)]{EiGo}, $I_a(\Sigma)$ has linear graded free resolution.
\end{proof}

\begin{thm}\label{theorem0} Let $\Sigma=(\ell_1,\ldots,\ell_n)\subset R=\mathbb K[x,y]$, be a collection of linear forms, some possibly proportional. Then, for any $1\leq a\leq n$, $I_a(\Sigma)$ has linear graded free resolution.
\end{thm}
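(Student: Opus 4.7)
The plan is to induct on $n=|\Sigma|$; the case $n=1$ is immediate since $I_1(\Sigma)$ is principal. In the inductive step, if no two linear forms of $\Sigma$ are proportional then in $R=\mathbb{K}[x,y]$ any two are automatically linearly independent, so $\mathcal{C}_\Sigma$ is MDS and item~(3) of the Introduction yields the linear graded free resolution. Otherwise, some $\ell\in\Sigma$ has multiplicity $m\geq 2$, and I would invoke Lemma~\ref{lemma_k=2} with this choice of $\ell$. Writing $\Sigma'=\Sigma\setminus\{\ell\}$ and $\Sigma''=\Sigma'\setminus\{\ell\}$, hypothesis~(i) is immediate from the inductive hypothesis applied to $\Sigma'$ (of size $n-1$). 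The only remaining task, and the main obstacle, is hypothesis~(ii):
\[
I_a(\Sigma')\,:\,\ell \;=\; I_{a-1}(\Sigma'').
\]

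To attack (ii), I would split the generators of each ideal by how many factors of $\ell$ they contain. Letting $\Lambda_0$ denote the sub-multiset of $\Sigma$ consisting of the forms not proportional to $\ell$, one has
\[
I_a(\Sigma')=\sum_{j=0}^{\min(m-1,a)}\ell^j\,I_{a-j}(\Lambda_0),\qquad
I_{a-1}(\Sigma'')=\sum_{j=0}^{\min(m-2,a-1)}\ell^j\,I_{a-1-j}(\Lambda_0).
\]
The inclusion $I_{a-1}(\Sigma'')\subseteq I_a(\Sigma'):\ell$ is clear from these. For the reverse, chasing an $f$ with $\ell f\in I_a(\Sigma')$ through the first decomposition and cancelling the leading $\ell$ reduces the whole problem to the single ``coprime colon'' inclusion
\[
I_a(\Lambda_0)\,:\,\ell \;\subseteq\; I_{a-1}(\Lambda_0),
\]
in which every linear form of $\Lambda_0$ is coprime to $\ell$.

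The key step, which I expect to be the hardest, is this coprime colon inclusion in two variables. My approach is to establish a normal form for $I_b$ of a multiset of linear forms in $\mathbb{K}[x,y]$: writing $\Lambda_0=(\alpha_1^{n_1},\ldots,\alpha_r^{n_r})$ with pairwise non-proportional $\alpha_i$ and $N=\sum n_i$, the greatest common divisor of the generators of $I_b(\Lambda_0)$ equals $\prod_i\alpha_i^{\mu_i}$ with $\mu_i=\max(0,b-N+n_i)$, and a monomial count specific to the two-variable situation shows that, once this content is pulled out, the residual ideal is the full power $\mathfrak{m}^{\,b-\sum\mu_i}$; hence
\[
I_b(\Lambda_0)\;=\;\Big(\prod_{i=1}^r\alpha_i^{\mu_i}\Big)\cdot\mathfrak{m}^{\,b-\sum_i\mu_i}.
\]
Coprimality of $\ell$ to $\prod\alpha_i^{\mu_i}$ then gives $I_a(\Lambda_0):\ell=\prod_i\alpha_i^{\mu_i}\cdot\mathfrak{m}^{(a-\sum\mu_i)-1}$; comparing with the analogous factorization of $I_{a-1}(\Lambda_0)$, whose forced exponents $\tilde\mu_i$ satisfy $\mu_i-\tilde\mu_i\in\{0,1\}$, yields the desired inclusion by a short exponent count. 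This explicit factorization is genuinely two-variable in nature, and is where the hypothesis $k=2$ does its essential work.
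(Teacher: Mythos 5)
Your argument is correct and shares the paper's overall architecture: reduce to Lemma~\ref{lemma_k=2}, handle the no-repetitions case via the MDS/coding-theory fact (item~(3), i.e., \cite[Theorem 3.1]{To}). The genuine difference is in how hypothesis~(ii) of the lemma is handled. The paper verifies the colon identity only at the reduced level, from $I_b(\A)={\frak m}^b$ for $b<|\A|$, and then asserts that repeated applications of the lemma ``adding one-by-one'' propagate everything; but the lemma's conclusion hands back the colon relation only for the form just added, so when the chain switches to a \emph{new} repeated form $\ell'$ (which has multiplicity~$1$ in the already-enlarged $\Sigma'$), the required equality $I_a(\Sigma'):\ell'=I_{a-1}(\Sigma'\setminus\{\ell'\})$ is not supplied by the chain and needs its own verification. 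You supply exactly that: splitting $I_a(\Sigma')$ by $\ell$-content, reducing to the coprime colon $I_a(\Lambda_0):\ell\subseteq I_{a-1}(\Lambda_0)$, and settling it with the explicit factorization $I_b(\Lambda_0)=\bigl(\prod_i\alpha_i^{\mu_i}\bigr){\frak m}^{\,b-\sum_i\mu_i}$ where $\mu_i=\max(0,\,b-N+n_i)$. That factorization is not stated in the paper, but it follows from the same \cite[Theorem 3.1]{To} applied to the residual multiset $(\alpha_i^{\,n_i-\mu_i})_i$: pigeonhole on exponents forces every $\alpha_i$-exponent in a $b$-fold product to be at least $\mu_i$, so the content is a common factor, and the residual multiset has minimum distance $\geq b-\sum_i\mu_i$, making the residual ideal the full power of ${\frak m}$. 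In short, you and the paper agree on the skeleton (Lemma~\ref{lemma_k=2} plus the coding-theoretic base case), but your direct induction on $n$ with an explicit coprime-colon lemma is more self-contained, and your factorization of $I_b(\Lambda_0)$---equivalently, a primary decomposition of fold-product ideals in two variables---is the missing piece of bookkeeping that makes the reduction-to-$\A$ strategy fully airtight.
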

\begin{proof} Let $\A$ be the reduced support of $\Sigma$; i.e., $\A$ consists of all nonproportional elements of $\Sigma$. If we show that $I_b(\A)$ has linear graded free resolution, and $I_b(\A):\ell=I_{b-1}(\A\setminus\{\ell\})$, for any $\ell\in\A$, and any $1\leq b\leq|\A|$, then via Lemma \ref{lemma_k=2} and the conventions in the Introduction, by adding one-by-one linear forms according to their multiplicity to obtain $\Sigma$, we obtain that $I_a(\Sigma)$ has linear graded resolution as well, for any $1\leq a\leq n$.

Suppose $\A=\{\ell_1,\ldots,\ell_m\}, m\leq n$, and $\gcd(\ell_i,\ell_j)=1$, for all $1\leq i < j\leq m$. If $m=1$, then part (6) of the introduction shows directly that $I_a(\Sigma)$ has linear graded free resolution.

Suppose $m\geq 2$. Then, ${\rm rank}(\A)=2$. Let $\mathcal C_{\A}$ be the linear code with generating matrix $G$ having columns dual to the linear forms of $\A$. This code has length $m$ and dimension 2. Since any two of the linear forms of $\A$ are linearly independent, the maximum number of columns of $G$ that span a $2-1=1$ dimensional vector space is 1. So the minimum distance of $\mathcal C_{\A}$ is $m-1$ (see, e.g., \cite[Remark 2.2]{ToVa}). But in these condition, by \cite[Theorem 3.1]{To}, we have indeed that for any $1\leq b\leq m-1$, $$I_b(\A)=\langle x,y\rangle^b.$$ Also $I_m(\A)=\langle \ell_1\cdots\ell_m\rangle$.

It is clear that for any $1\leq b\leq m$, $I_b(\A)$ has linear graded free resolution, and that $I_b(\A):\ell=I_{b-1}(\A\setminus\{\ell\})$, for any $\ell\in\A$.
\end{proof}

\subsection{The case $a=n-2$.} Let $\A=\{\ell_1,\ldots,\ell_n\}\subset R=\mathbb K[x_1,\ldots,x_k]$ be a hyperplane arrangement. Our main goal in this section is to show that the graded $R$-module $R/I_{n-2}(\A)$ has linear graded free resolution. For $1\leq i<j\leq n$, denote $$f_{i,j}:=\frac{\ell_1\ell_2\cdots\ell_n}{\ell_i\ell_j}\in R,$$ the generators of the ideal $I_{n-2}(\A)$.

Consider the complex of (graded) $R$-modules:

$${\bf CC}(\A):\, 0\rightarrow \bigoplus_{1\leq i<j\leq n}\frac{R(-(n-2))}{\langle \ell_i,\ell_j\rangle}\stackrel{\phi_{\A}} \longrightarrow\frac{R}{I_{n-1}(\A)}\stackrel{\pi_{\A}} \longrightarrow\frac{R}{I_{n-2}(\A)}\rightarrow 0,$$ where the map $\pi_{\A}$ is the natural surjection defined from the inclusion $I_{n-1}(\A)\subset I_{n-2}(\A)$ and the map $\phi_{\A}$ is defined as $$\phi_{\A}(\ldots,\widehat{h}_{i,j},\ldots)=\left[\sum_{1\leq i<j\leq n}h_{i,j}\left(\prod_{u\in[n]\setminus\{i,j\}}\ell_u\right)\right] \mbox{ mod }I_{n-1}(\A),$$ where $\displaystyle\widehat{h}_{i,j}\in \frac{R}{\langle \ell_i,\ell_j\rangle}$.

Obviously, $\displaystyle\prod_{u\in[n]\setminus\{i,j\}}\ell_u\in I_{n-2}(\A)$. This gives us that

$\bullet$ $\phi_{\A}$ is {\em well-defined}: if $h_{i,j},g_{i,j}\in R$ are such that $h_{i,j}-g_{i,j}\in\langle \ell_i,\ell_j\rangle$, then obviously $$(h_{i,j}-g_{i,j})\left(\prod_{u\in[n]\setminus\{i,j\}}\ell_j\right)\in I_{n-1}(\A).$$

$\bullet$ $\displaystyle {\rm Im}(\phi_{\A})=\frac{I_{n-2}(\A)}{I_{n-1}(\A)}={\rm ker}(\pi_{\A})$.

\medskip

A {\em 3-dependency} is a linear combination among \underline{exactly} three of the linear forms of $\A$. Suppose $${\rm D}_{i_1,i_2,i_3}:\,c_{i_1}\ell_{i_1}+c_{i_2}\ell_{i_2}+c_{i_3}\ell_{i_3}=0,\, 1\leq i_1<i_2<i_3\leq n,$$ is such a 3-dependency, where $c_{i_1},c_{i_2},c_{i_3}\in\mathbb K\setminus\{0\}$. In matroid language we say that $\{i_1,i_2,i_3\}$ is a {\em circuit}.

Let $\displaystyle F_{i_1,i_2,i_3}:=\frac{\ell_1\ell_2\cdots\ell_n}{\ell_{i_1}\ell_{i_2}\ell_{i_3}}\in R$. Then, by multiplying the dependency ${\rm D}_{i_1,i_2,i_3}$ with $F_{i_1,i_2,i_3}$, one obtains $$c_{i_1}f_{i_2,i_3}+c_{i_2}f_{i_1,i_3}+c_{i_3}f_{i_1,i_2}=0.$$

By denoting with $\Lambda(\A)$ to be the left-most $R$-module in the complex above, i.e.

$$\Lambda(\A):=\bigoplus_{1\leq i<j\leq n}\frac{R}{\langle \ell_i,\ell_j\rangle},$$ we just obtained that the element of $\Lambda(\A)$ $$\widehat{{\bf c}}_{i_1,i_2,i_3}:= (0,\ldots,0,\underbrace{\widehat{c}_{i_3}}_{(i_1,i_2)},0,\ldots,0,\underbrace{\widehat{c}_{i_2}}_{(i_1,i_3)},0,\ldots,0, \underbrace{\widehat{c}_{i_1}}_{(i_2,i_3)},0,\ldots,0)$$ is an element of the kernel of $\phi_{\A}$; the ``underbraces'' specify the position (or the summand) in the module $\Lambda(\A)$. In fact we have that the entire cyclic $R$-submodule $\mathcal R_{i_1,i_2,i_3}:=R\cdot \widehat{{\bf c}}_{i_1,i_2,i_3}$ is included in ${\rm ker}(\phi_{\A}),$ leading to $$\Lambda_3(\A)\subseteq {\rm ker}(\phi_{\A}),$$ where $\Lambda_3(\A)$ denotes the (internal) direct sum of the $R$-submodules $\mathcal R_{i_1,i_2,i_3}$ of $\Lambda(\A)$, running over all circuits $\{i_1,i_2,i_3\}$ of $\A$.

\begin{lem}\label{kernel} We have $\Lambda_3(\A)={\rm ker}(\phi_{\A})$.
\end{lem}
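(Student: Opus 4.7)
The plan is to prove the missing inclusion $\ker(\phi_\A)\subseteq\Lambda_3(\A)$ via a pair-by-pair localization followed by a reduction to rank-$2$ flats. Starting from $\mathbf h=(\widehat h_{i,j})\in\ker(\phi_\A)$, the defining condition gives an identity $\sum_{i<j} h_{i,j} f_{i,j}=\sum_v g_v(L/\ell_v)$ in $R$ for some $g_v\in R$, with $L=\ell_1\cdots\ell_n$.

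First I would fix a pair $(i_0,j_0)$ and reduce this identity modulo $\langle\ell_{i_0},\ell_{j_0}\rangle$. On the right, every $L/\ell_v$ is divisible by $\ell_{i_0}$ or $\ell_{j_0}$ (by both when $v\notin\{i_0,j_0\}$), so the entire right side vanishes. On the left, every $f_{i,j}$ with $\{i,j\}\ne\{i_0,j_0\}$ is divisible by $\ell_{i_0}$ or $\ell_{j_0}$, leaving only the congruence $h_{i_0,j_0} f_{i_0,j_0}\equiv 0\pmod{\langle\ell_{i_0},\ell_{j_0}\rangle}$. In the integral domain $R/\langle\ell_{i_0},\ell_{j_0}\rangle$, the image of $f_{i_0,j_0}$ is $\prod_{u\ne i_0,j_0}\bar\ell_u$, which vanishes iff some $\ell_u\in\langle\ell_{i_0},\ell_{j_0}\rangle$ --- equivalently, iff $\{i_0,j_0,u\}$ is a $3$-circuit. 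When no such circuit exists, $f_{i_0,j_0}$ is a non-zero-divisor, forcing $\widehat h_{i_0,j_0}=0$.

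Consequently, every nonzero entry of $\mathbf h$ sits at a pair $(i,j)$ contained in some rank-$2$ flat $F\subseteq\A$ with $|F|\ge 3$. For any $3$-circuit $\{i_0,j_0,u_0\}$ inside such an $F$, the dependency $c_{i_0}\ell_{i_0}+c_{j_0}\ell_{j_0}+c_{u_0}\ell_{u_0}=0$ has all three coefficients nonzero, so the syzygy $\widehat{\mathbf c}_{i_0,j_0,u_0}\in\Lambda_3(\A)$ carries a nonzero constant at position $(i_0,j_0)$. Subtracting the appropriate multiple kills $\widehat h_{i_0,j_0}$ and only modifies the entries at $(i_0,u_0)$ and $(j_0,u_0)$, which still lie in the same flat $F$ (since $\ell_{u_0}$ is in the span of $\ell_{i_0},\ell_{j_0}$). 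Thus the elimination stays inside $F$, and the restrictions $\mathbf h_F$ for distinct flats can be handled independently.

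Finally, the lemma must be verified on a single rank-$2$ flat $F$. Setting $L_F=\prod_{\ell\in F}\ell$, $L_F'=L/L_F$, and $f_{i,j}^F:=L_F/(\ell_i\ell_j)$, one has $f_{i,j}=L_F' f_{i,j}^F$, and coprimality of $L_F$ and $L_F'$ lets the condition $L_F'\big(\sum_{\{i,j\}\subseteq F} h_{i,j} f_{i,j}^F\big)\in I_{n-1}(\A)$ descend to $\sum h_{i,j} f_{i,j}^F\in I_{|F|-1}(F)$, reducing everything to the analog of the lemma for $F$ itself. After a change of coordinates putting $F\subset\mathbb K[x,y]$, every pair-ideal $\langle\ell_i,\ell_j\rangle$ equals $\langle x,y\rangle$, and $I_{|F|-1}(F)=\langle x,y\rangle^{|F|-1}$ by the discussion of Section~\ref{k=2}. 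The hard part is this last step: with every triple in $F$ a circuit, the circuit syzygies $\widehat{\mathbf c}_{i,j,k}$ are plentiful, but confirming that they span the entire kernel of $\phi_F$ requires a careful Hilbert-series/dimension comparison, or equivalently a terminating Gaussian-elimination argument indexed so that no previously cleared entry is revisited.
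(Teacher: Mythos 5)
Your first step is a genuine improvement in cleanliness over the paper's argument: reducing the identity $\sum h_{i,j}f_{i,j}\in I_{n-1}(\A)$ modulo the prime $\langle\ell_{i_0},\ell_{j_0}\rangle$ does, as you say, annihilate every term on both sides except $h_{i_0,j_0}f_{i_0,j_0}$, and since $R/\langle\ell_{i_0},\ell_{j_0}\rangle$ is a domain in which $\bar f_{i_0,j_0}\neq0$ precisely when no third $\ell_u$ lies in $\langle\ell_{i_0},\ell_{j_0}\rangle$, you immediately get $\widehat h_{i_0,j_0}=0$ at every pair not contained in a rank-$2$ flat of multiplicity $\ge 3$. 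The paper reaches this by an induction on $n$ that removes one hyperplane at a time; your localization handles it directly. The flat-by-flat decoupling is also in the right spirit (the paper notes after its proof that $\ker(\phi_\A)$ decomposes by localization at the associated primes of $\Lambda(\A)$), though as written your "subtract a circuit syzygy and it stays inside $F$" does not by itself prove that the individual blocks $\mathbf h_F$ lie in $\ker(\phi_\A)$; the clean way to get that is to localize the kernel condition at $\mathfrak p_F=I(F)$ and use that $I_{|F|-1}(F)$ is $\mathfrak p_F$-primary.

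The genuine gap is exactly the one you flag in your last sentences: you reduce everything to the statement that on a single rank-$2$ flat $F$ (equivalently, for a rank-$2$ arrangement in $\mathbb K[x,y]$) the circuit syzygies span $\ker(\phi_F)$, and you leave this as "the hard part." But this is precisely where the paper does the real computation (its Claim 3): it observes that in two variables all the pair-ideals collapse to $\langle x,y\rangle$, so $\ker(\phi_{F})$ is a $\mathbb K$-vector space of dimension $\binom{s}{2}-\mu(I_{s-2}(F))$; then it identifies $I_{s-2}(F)=\langle x,y\rangle^{s-2}$ via the coding-theory argument, gets $\mu=s-1$, and compares against $p(F)=\binom{s-1}{2}$, the dimension of the span of the circuit vectors $\mathbf c_{1,u,v}$ computed in Claim 1. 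Without carrying out this dimension count (or an explicitly terminating elimination --- e.g.\ clear every pair $(i,j)$ with $1<i<j$ using the circuit $\{1,i,j\}$, leaving only entries at $(1,i)$, and then show those must vanish), the inclusion $\ker(\phi_\A)\subseteq\Lambda_3(\A)$ is not established. So the proposal is a different and in part cleaner route to the reduction, but the base case that makes the whole argument close is missing.
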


We are going to prove the lemma a bit later.

\medskip

Let $\displaystyle\mathcal V(\A):=\Lambda_3(\A)\otimes_{R}\mathbb K$ be the $\mathbb K$-vector subspace of $\mathbb K^m$, where $\displaystyle m:={{n}\choose{2}}$. Let $p(\A):=\dim_{\mathbb K}\mathcal V(\A)$, be the number of ``independent'' 3-dependencies of $\A$. In Claim 1 in the subsection following the proof of the main result, we prove that $\displaystyle p(\A)=\sum_{X\in L_2(\A)}{{|\A_X|-1}\choose{2}}$.

With these notations, our first main result of this paper is the following.

\begin{thm} \label{theorem1} Let $\A=\{\ell_1,\ldots,\ell_n\}\subset R$ be a hyperplane arrangement. Then the $R$-module $R/I_{n-2}(\A)$ has graded linear free resolution
$$0\rightarrow R^{m-n-p(\A)+1}(-n)\rightarrow R^{2m-n-2p(\A)}(-(n-1))\rightarrow R^{m-p(\A)}(-(n-2))\rightarrow R\rightarrow R/I_{n-2}(\A) \rightarrow 0.$$
\end{thm}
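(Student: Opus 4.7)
The plan is to produce the claimed linear free resolution of $R/I_{n-2}(\A)$ from the four-term exact sequence
\[0\to\Lambda_3(\A)(-(n-2))\to\Lambda(\A)(-(n-2))\to R/I_{n-1}(\A)\to R/I_{n-2}(\A)\to 0\]
provided by the complex ${\bf CC}(\A)$ and Lemma \ref{kernel}, by assembling linear resolutions of the three leftmost modules and splicing them via iterated mapping cones. The resolution of $R/I_{n-1}(\A)$ is the Hilbert-Burch complex from Subsection \ref{a=n-1}, with Betti numbers $(1,n,n-1)$; that of $\Lambda(\A)=\bigoplus_{i<j}R/\langle\ell_i,\ell_j\rangle$ is the direct sum of Koszul complexes on each pair, with Betti numbers $(m,2m,m)$ where $m=\binom{n}{2}$.

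The essential new step is a linear resolution of $\Lambda_3(\A)$. Since the three hyperplanes of any circuit $\{i_1,i_2,i_3\}$ share a common rank-$2$ flat $X\in L_2(\A)$, we have $\langle\ell_{i_1},\ell_{i_2}\rangle=\langle\ell_{i_1},\ell_{i_3}\rangle=\langle\ell_{i_2},\ell_{i_3}\rangle=I(X)$, and each generator $\widehat{{\bf c}}_{i_1,i_2,i_3}$ is supported only in the summands of $\Lambda(\A)$ indexed by pairs inside $\A_X$. This yields a direct sum decomposition $\Lambda_3(\A)=\bigoplus_{X\in L_2(\A),\,|\A_X|\ge 3}\Lambda_3(\A_X)$. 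Within each flat $X$ with $d=|\A_X|$, the $\binom{d-1}{2}$ independent 3-dependencies (as counted in Claim 1, proved after the theorem) have $\mathbb K$-linearly independent images in $\Lambda(\A_X)\otimes_{R}\mathbb K$, so they freely generate an $R/I(X)$-submodule of rank $\binom{d-1}{2}$; any $R/I(X)$-relation among them would specialize modulo $\frak m$ to a contradictory $\mathbb K$-relation. Consequently $\Lambda_3(\A)\cong\bigoplus_{X}(R/I(X))^{\binom{|\A_X|-1}{2}}$ has a linear Koszul resolution with Betti numbers $(p(\A),2p(\A),p(\A))$.

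With these three resolutions in hand, the first mapping cone on $\Lambda_3(\A)(-(n-2))\hookrightarrow\Lambda(\A)(-(n-2))$ admits $(p,2p,p)$ cancellations -- the chain map's constant-entry parts are injective because the generators of $\Lambda_3(\A)$ are $\mathbb K$-linearly independent in $\Lambda(\A)\otimes\mathbb K$ -- producing a linear resolution of $(\Lambda(\A)/\Lambda_3(\A))(-(n-2))$ with Betti numbers $(m-p,2m-2p,m-p)$. The second mapping cone with the Hilbert-Burch resolution of $R/I_{n-1}(\A)$ admits further cancellations which, if they occur with the right ranks, produce the claimed resolution with Betti numbers $(1,m-p,2m-n-2p,m-n-p+1)$. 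As a numerical check, the Hilbert series of $R/I_{n-2}(\A)$ computed via the alternating sum from the four-term sequence gives numerator $1-(m-p)t^{n-2}+(2m-n-2p)t^{n-1}-(m-n-p+1)t^n$, matching the Betti counts.

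The main obstacle is justifying the cancellations in the second cone, equivalently showing that ${\rm reg}(R/I_{n-2}(\A))\le n-3$. Via the long exact sequence of $\Tor$ attached to $0\to(\Lambda(\A)/\Lambda_3(\A))(-(n-2))\to R/I_{n-1}(\A)\to R/I_{n-2}(\A)\to 0$, this reduces to verifying that two natural maps are surjective: $\Tor^1_R((\Lambda(\A)/\Lambda_3(\A))(-(n-2)),\mathbb K)_{n-1}\to\Tor^1_R(R/I_{n-1}(\A),\mathbb K)_{n-1}$ of $\mathbb K$-dimensions $2m-2p$ and $n$, and $\Tor^2_R((\Lambda(\A)/\Lambda_3(\A))(-(n-2)),\mathbb K)_n\to\Tor^2_R(R/I_{n-1}(\A),\mathbb K)_n$ of dimensions $m-p$ and $n-1$. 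I expect these surjectivities to follow from an explicit identification of the top syzygies on both sides in terms of the Hilbert-Burch syzygies among the $f_j=L/\ell_j$ and the Koszul syzygies of the flat-wise summands $R/I(X)$ appearing in $\Lambda_3(\A)$.
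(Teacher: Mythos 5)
Your approach mirrors the paper's: resolve $\Lambda_3(\A)$, $\Lambda(\A)$, and $R/I_{n-1}(\A)$ and splice by iterated mapping cones. Your argument that $\Lambda_3(\A)$ decomposes over the rank-$2$ flats and is free over each $R/I(X)$, yielding linear Betti numbers $(p,2p,p)$, is exactly the content of the paper's Claim~1 and is correct. Your handling of the first mapping cone is also essentially right: since $\Lambda_3(\A)$ and $\Lambda(\A)$ both have $0$-linear resolutions and the quotient is generated in degree $0$, the maps $\Tor_i(\Lambda_3(\A),\mathbb K)\to\Tor_i(\Lambda(\A),\mathbb K)$ are injective for every $i$, not just $i=0$ (the connecting maps out of $\Tor_{i+1}(\Lambda(\A)/\Lambda_3(\A),\mathbb K)$ vanish in degree $i$ on degree grounds), so all $(p,2p,p)$ cancellations go through.

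The genuine gap is the second mapping cone. You correctly diagnose that the issue reduces to showing ${\rm reg}(R/I_{n-2}(\A))\le n-3$, and you translate this into surjectivity of two $\Tor$ maps, but you then leave it as an expectation rather than a proof. In fact the $\Tor_1$ surjectivity you list is automatic: $\Tor_1(R/I_{n-2}(\A),\mathbb K)_{n-1}=0$ because $I_{n-2}(\A)$ is generated in degree $n-2$, and $\Tor_0({\rm Im}(\phi_\A),\mathbb K)_{n-1}=0$ for the same reason, so the long exact sequence forces the map onto. It is the $\Tor_2$ surjectivity that needs real work, and the naive regularity inequality applied to $0\to{\rm Im}(\phi_\A)\to R/I_{n-1}(\A)\to R/I_{n-2}(\A)\to 0$ only gives ${\rm reg}(R/I_{n-2}(\A))\le n-2$, one worse than needed. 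The paper closes this gap by invoking \cite[Lemma~1.13]{EiGo} for both mapping cones; without that lemma or an explicit verification of the $\Tor_2$ surjectivity, your argument does not establish linearity of the resolution. The Hilbert series match you compute is a useful consistency check but cannot by itself certify that the resolution is minimal or linear.
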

\begin{proof} From Lemma \ref{kernel} we have the acyclic (graded) complex of $R$-modules:

$$0\rightarrow \Lambda_3(\A)(-(n-2))\rightarrow \Lambda(\A)(-(n-2))\stackrel{\phi_{\A}} \longrightarrow\frac{R}{I_{n-1}(\A)}\stackrel{\pi_{\A}} \longrightarrow\frac{R}{I_{n-2}(\A)}\rightarrow 0.$$

Since each $R/\langle \ell_i,\ell_j\rangle, i\neq j$ is isomorphic as $R$-modules with $T:=\mathbb K[x_1,\ldots,x_{k-2}]$, if $k\geq 3$, or $T:=\mathbb K$, if $k=2$, then we have the isomorphism of $R$-modules $$\Lambda_3(\A)\simeq T^{p(\A)}.$$

Since $\langle \ell_i,\ell_j\rangle, i\neq j$ is a complete intersection, the $R$-module $T\simeq R/\langle \ell_i,\ell_j\rangle$ has the (Koszul) linear free resolution $$0\rightarrow R(-2)\rightarrow R^2(-1)\rightarrow R\rightarrow T\rightarrow 0.$$ This leads to the linear free resolutions:
$$0\rightarrow R^{p(\A)}(-n)\rightarrow R^{2p(\A)}(-(n-1))\rightarrow R^{p(\A)}(-(n-2))\rightarrow \Lambda_3(\A)(-(n-2))\rightarrow 0,$$ and
$$0\rightarrow R^m(-n)\rightarrow R^{2m}(-(n-1))\rightarrow R^m(-(n-2))\rightarrow \Lambda(\A)(-(n-2))\rightarrow 0.$$

Then, via {\em mapping cone}, the (graded) $R$-module ${\rm Im}(\phi_{\A})$, has a free resolution:
$$0\rightarrow R^{p(\A)}(-n)\rightarrow \begin{array}{c}
R^{2p(\A)}(-(n-1))\\
\oplus\\
R^m(-n)\\
\end{array} \rightarrow \begin{array}{c}
R^{p(\A)}(-(n-2))\\
\oplus\\
R^{2m}(-(n-1))\\
\end{array}\rightarrow R^m(-(n-2))\rightarrow {\rm Im}(\phi_{\A})\rightarrow 0.$$

By \cite[Lemma 1.13]{EiGo}, ${\rm Im}(\phi_{\A})$ has linear free resolution, and this can be obtained from the above resolution via appropriate ``cancelations'':

$$0\rightarrow R^{m-p(\A)}(-n)\rightarrow R^{2m-2p(\A)}(-(n-1))\rightarrow R^{m-p(\A)}(-(n-2))\rightarrow {\rm Im}(\phi_{\A})\rightarrow 0.$$

The $R$-module, $R/I_{n-1}(\A)$ has linear free resolution (see, e.g., \cite[Lemma 3.2]{Sc})

$$0\rightarrow R^{n-1}(-n)\rightarrow R^n(-(n-1))\rightarrow R\rightarrow R/I_{n-1}(\A)\rightarrow 0.$$

Then again by {\em mapping cone}, since ${\rm Im}(\phi_{\A})={\rm ker}(\pi_{\A})$, we obtain the free resolution

$$0\rightarrow R^{m-p(\A)}(-n)\rightarrow \begin{array}{c}
R^{2m-2p(\A)}(-(n-1))\\
\oplus\\
R^{n-1}(-n)\\
\end{array}\rightarrow \begin{array}{c}
R^{m-p(\A)}(-(n-2))\\
\oplus\\
R^n(-(n-1))\\
\end{array}\rightarrow R\rightarrow R/I_{n-2}(\A)\rightarrow 0.$$

Using again \cite[Lemma 1.13]{EiGo}, $R/I_{n-2}(\A)$ has a linear free resolution, and after the appropriate cancellations the linear free resolution is the one claimed in the statement. \end{proof}

\subsubsection{Proof of Lemma \ref{kernel}.} \label{the_proof} Suppose $X_1,\ldots, X_e$ are all the rank 2 flats in $L(\A)$, the lattice of intersections of $\A$, with $n_u:=|\A_{X_u}|\geq 3$ for all $u=1,\ldots,e$. For a flat $X\in L(\A)$, $\A_X$ denotes the subset of hyperplanes of $\A$ that contain $X$.

\medskip

\noindent$\bullet$ {\bf Claim 1:} One has $$\Lambda_3(\A)\cong\bigoplus_{u=1}^e\Lambda_3(\A_{X_u})\mbox{ and } p(\A)=\dim_{\mathbb K}\mathcal V(\A)=\sum_{u=1}^e{{n_u-1}\choose{2}}.$$

{\em Proof Claim 1}. If $\{i_1,i_2,i_3\}$ is a circuit, then $X:=V(\ell_{i_1},\ell_{i_2},\ell_{i_3})$ is a rank 2 flat with $|\A_X|\geq 3$. If a non-zero tuple belongs to $\Lambda_3(\A_{X_u})\cap\Lambda_3(\A_{X_v}),u\neq v$, then it has at least one non-zero entry, say the $(i,j)$-th entry, that gives that there exist a circuit $\{i,j,a\}$ of $\A_{X_u}$, and a circuit $\{i,j,b\}$ of $\A_{X_v}$. Since $X_u=V(\ell_i,\ell_j)$ and $X_v=V(\ell_i,\ell_j)$, we get a contradiction.

For the second part we have to show that if $X$ is a rank 2 flat with $s:=|\A_X|\geq 3$, then $\displaystyle p(\A_X)={{s-1}\choose{2}}$. After a change of variables and some reordering of the hyperplanes in $\A$, we can suppose $\A_X=\{\ell_1,\ldots,\ell_s\}\subset \mathbb K[x_1,x_2]\subset R$, with $\ell_1=x_1$, and $\ell_i=x_1+\lambda_ix_2,\lambda_i\in\mathbb K\setminus\{0\},\, i=2,\ldots,s$, and $\lambda_i\neq\lambda_j$, if $i\neq j$.

Any three of the linear forms of $\A_X$ lead to a dependency, and for any $2\leq u<v<w\leq s$ we have

\begin{eqnarray}
{\rm D}_{u,v,w}:&& (\lambda_v-\lambda_w)\ell_u+(\lambda_w-\lambda_u)\ell_v+(\lambda_u-\lambda_v)\ell_w=0\nonumber\\
{\rm D}_{1,u,v}:&& (\lambda_u-\lambda_v)\ell_1+\lambda_v\ell_u+(-\lambda_u)\ell_v=0\nonumber\\
{\rm D}_{1,u,w}:&& (\lambda_u-\lambda_w)\ell_1+\lambda_w\ell_u+(-\lambda_u)\ell_w=0.\nonumber
\end{eqnarray} It is easy to check that $${\rm D}_{u,v,w}=\frac{1}{\lambda_u}[(\lambda_u-\lambda_w){\rm D}_{1,u,v}-(\lambda_u-\lambda_v) {\rm D}_{1,u,w}],$$ which leads to $${\bf c}_{u,v,w}=\frac{1}{\lambda_u}[(\lambda_u-\lambda_w){\bf c}_{1,u,v}-(\lambda_u-\lambda_v) {\bf c}_{1,u,w}].$$ This means that $\mathcal V(\A_X)$ is generated by the vectors ${\bf c}_{1,u,v}$, where $2\leq u<v\leq s$.

Suppose there exist $\gamma_{u,v}\in\mathbb K, 2\leq u<v\leq s$, such that $\displaystyle {\bf c}:=\sum_{2\leq u<v\leq s}\gamma_{u,v}{\bf c}_{1,u,v}=0$. For each pair $2\leq u<v\leq s$, the $(u,v)$-entry of ${\bf c}$ equals to $\gamma_{u,v} (\lambda_u-\lambda_v)$, which must be zero. Since $\lambda_u\neq\lambda_v$, we get that $\gamma_{u,v}=0$.

So $\{{\bf c}_{1,u,v}|2\leq u<v\leq s\}$ is a basis for $\mathcal V(\A_X)$, and hence the claim.

\medskip

\noindent$\bullet$ {\bf Claim 2:} Lemma \ref{kernel} is true if $n=2$.

{\em Proof Claim 2}. We have $\A=\{\ell_1,\ell_2\}\subset R$, $\gcd(\ell_1,\ell_2)=1$. Obviously, $\Lambda_3(\A)=0$. With the convention that $I_0(\A)=R$, and since $I_1(\A)=\langle \ell_1,\ell_2\rangle$, then the complex ${\bf CC}(\A)$ translates into the complex

$$0\rightarrow \frac{R}{\langle \ell_1,\ell_2\rangle}\stackrel{\phi_{\A}} \longrightarrow\frac{R}{\langle \ell_1,\ell_2\rangle}\rightarrow 0,$$ where $\phi_{\A}$ is the identity map, and hence ${\rm ker}(\phi_{\A})$ is also zero.

\medskip

\noindent$\bullet$ {\bf Claim 3:} Lemma \ref{kernel} is true if $k=2$.

{\em Proof Claim 3}. Let $\A=\{\ell_1,\ldots,\ell_n\}\subset R:=\mathbb K[x_1,x_2]$ with $\gcd(\ell_i,\ell_j)=1$, if $i\neq j$. From Claim 2, suppose $n\geq 3$. Let us look at the map $\phi_{\A}$:
$$\bigoplus_{1\leq i<j\leq n}\frac{R(-(n-2))}{\langle \ell_i,\ell_j\rangle}\stackrel{\phi_{\A}} \longrightarrow\frac{R}{I_{n-1}(\A)},$$ with $$\phi_{\A}(\ldots,\widehat{h}_{i,j},\ldots)=\left[\sum_{1\leq i<j\leq n}h_{i,j}\left(\prod_{u\in[n]\setminus\{i,j\}}\ell_u\right)\right] \mbox{ mod }I_{n-1}(\A).$$

Since for any $i\neq j$, $\langle\ell_i,\ell_j\rangle={\frak m}=\langle x_1,x_2\rangle$, if degree of $h_{i,j}$ is $\geq 1$, then $\widehat{h}_{i,j}=0$ in $R/\langle\ell_i,\ell_j\rangle$. Then $\displaystyle{\rm ker}(\phi_{\A})\subset\mathbb K^{{n}\choose{2}}$ is the $\mathbb K$-vector subspace of all $\mathbb K$-dependencies among the standard generators $f_{i,j}$ of $I_{n-2}(\A)$. So
$$\dim_{\mathbb K}{\rm ker}(\phi_{\A})={{n}\choose{2}}-\mu(I_{n-2}(\A)),$$ where $\mu(I_{n-2}(\A))$ is the minimum number of generators of $I_{n-2}(\A)$.

Same coding theory argument as in the proof of Theorem \ref{theorem0} gives $$I_{n-2}(\A)=\langle x_1,x_2\rangle^{n-2}.$$ Therefore, $\mu(I_{n-2}(\A))=n-1$, leading to $$\dim_{\mathbb K}{\rm ker}(\phi_{\A})={{n}\choose{2}}-(n-1)={{n-1}\choose{2}}.$$ But from the proof of the second part of Claim 1, this is exactly $p(\A)$, leading to $\Lambda_3(\A)={\rm ker}(\phi_{\A})$.

\vskip .5in

At this moment we proceed to prove Lemma \ref{kernel}. Let $\A=\{\ell_1,\ldots,\ell_n\}\subset R:=\mathbb K[x_1,\ldots,x_k], k\geq 2$ with $\gcd(\ell_i,\ell_j)=1$, if $i\neq j$.

We will use induction on $|\A|=n\geq 2$. From Claim 2, the base case $n=2$ is verified.

Suppose $n\geq 3$. For $1\leq i<j\leq n$, let $h_{i,j}\in R$, such that $\phi_{\A}(\ldots,\widehat{h}_{i,j},\ldots)=0$ in $R/I_{n-1}(\A)$. So
$${\bf P}:=\sum_{1\leq i<j\leq n}h_{i,j}\left(\prod_{u\in[n]\setminus\{i,j\}}\ell_u\right)\in I_{n-1}(\A).$$

If ${\rm rank}(\A)=2$, then modulo a change of variables, Claim 3 proves the result. So assume that ${\rm rank}(\A)\geq 3$.

For any $1\leq i<j\leq n$, there exists $u(i,j)\in[n]\setminus\{i,j\}$ such that $$\ell_{u(i,j)}\notin\langle \ell_i,\ell_j\rangle.$$ This is true because, otherwise there would exist $1\leq i_0<j_0\leq n$ such that for all $v=1,\ldots,n$, we would have $\ell_v\in \langle\ell_{i_0},\ell_{j_0}\rangle$, and so ${\rm rank}(\A)=2$; a contradiction with the assumption we made above.

Without any loss of generality, suppose $h_{1,2}\neq 0$, and suppose $u(1,2)=n$. Let $\A':=\A\setminus\{\ell_n\}$, and denote $n':=n-1=|\A'|$.

We can rewrite

$${\bf P}=\sum_{1\leq i<j\leq n-1}\left[\left(\ell_nh_{i,j}+\frac{1}{2}h_{i,n}\ell_j+\frac{1}{2}h_{j,n}\ell_i\right) \prod_{v\in[n-1]\setminus\{i,j\}}\ell_v\right].$$ Since $I_{n-1}(\A)=\ell_n I_{n'-1}(\A')+I_{n'}(\A')\subset I_{n'-1}(\A')$, we have that $$\phi_{\A'}(\ldots,\ell_n\widehat{h}_{i,j},\ldots)=0,$$ where the argument of the map has $\displaystyle {{n-1}\choose{2}}$ entries, as $1\leq i<j\leq n-1$.

\medskip

If the rank 2 flat $X:=V(\ell_1,\ell_2)$ has $|\A_X|=2$ (i.e., there is no 3-dependency of $\A$ containing both $\ell_1$ and $\ell_2$), then also $|\A'_X|=2$. By induction, ${\rm ker}(\phi_{\A'})=\Lambda_3(\A')$, so  $\ell_n\widehat{h}_{1,2}=0$ in $R/\langle\ell_1,\ell_2\rangle$. Since $\ell_n\notin\langle\ell_1,\ell_2\rangle$, we obtain $\widehat{h}_{1,2}=0$ in $R/\langle\ell_1,\ell_2\rangle$.

\medskip

Suppose the flat $X=V(\ell_1,\ell_2)$ from above has $\A_X=\{\ell_1,\ell_2,\ldots,\ell_s\}\subset\A$ with $s\geq 3$. Since $\ell_n\notin\langle\ell_1,\ell_2\rangle$, we have again that $\A'_X=\A_X\subseteq\A'$.\footnote{It is also clear, that for any $a\neq b\in\{1,\ldots,s\}$, we can pick $u(a,b)=n$.}

By induction, together with Claim 1, we have that the $\displaystyle {{s}\choose{2}}$-tuple

$$(\ell_n\widehat{h}_{1,2},\ldots,\ell_n\widehat{h}_{i,j},\ldots,\ell_n\widehat{h}_{s-1,s})\in\Lambda_3(\A'_X).$$

Same as in the proof of Claim 1, after a change of variables, let us assume that $\ell_1=x_1$, and $\ell_i=x_1+\lambda_ix_2,\lambda_i\in\mathbb K\setminus\{0\},\, i=2,\ldots,s$, and $\lambda_i\neq\lambda_j$, if $i\neq j$. Then

$$(\ell_n\widehat{h}_{1,2},\ldots,\ell_n\widehat{h}_{i,j},\ldots,\ell_n\widehat{h}_{s-1,s})=\sum_{2\leq u<v\leq s}\widehat{g}_{u,v}{\bf c}_{1,u,v},$$ for some $\widehat{g}_{u,v}\in R/\langle \ell_1,\ell_2\rangle$\footnote{For any $u\neq v\in\{1,\ldots,s\}$, we have $\langle\ell_u,\ell_v\rangle=\langle\ell_1,\ell_2\rangle=\langle x_1,x_2\rangle$.}, and where for $2\leq u<v\leq s$,

$${\bf c}_{1,u,v}=(0,\ldots,0,\underbrace{-\lambda_u}_{(1,u)},0,\ldots,0,\underbrace{\lambda_v}_{(1,v)},0,\ldots,0, \underbrace{(\lambda_u-\lambda_v)}_{(u,v)},0,\ldots,0).$$

Equating each entry, we obtain that in $R/\langle \ell_1,\ell_2\rangle$ we have
\begin{enumerate}
  \item For each $2\leq u<v\leq s$, $$\ell_n\widehat{h}_{u,v}=(\lambda_u-\lambda_v)\widehat{g}_{u,v}.$$
  \item For each $2\leq w\leq s$, $$\ell_n\widehat{h}_{1,w}=\lambda_w\left(\sum_{t=2}^{w-1}\widehat{g}_{t,w}-\sum_{t=w+1}^s\widehat{g}_{w,t}\right).$$
\end{enumerate}

Solving for $\widehat{g}_{u,v}$ in the first group of equations, and plugging in the second group of equations, we obtain that for each $2\leq w\leq s$

$$\ell_n\widehat{h}_{1,w}=\ell_n\lambda_w\left(\sum_{t=2}^{w-1}\frac{1}{\lambda_t-\lambda_w}\widehat{h}_{t,w}- \sum_{t=w+1}^s\frac{1}{\lambda_w-\lambda_t}\widehat{h}_{w,t}\right),$$ which we can immediately simplify by $\ell_n$, since $\ell_n\notin\langle\ell_1,\ell_2\rangle$.

This lead to the following equation:

$$(\widehat{h}_{1,2},\ldots,\widehat{h}_{i,j},\ldots,\widehat{h}_{s-1,s})=\sum_{2\leq u<v\leq s}\frac{\widehat{h}_{u,v}}{\lambda_u-\lambda_v}{\bf c}_{1,u,v}.$$ One should observe that $\ell_n$ does not show up anywhere in this equation. What show up are all the pairs of distinct indices of the linear forms of $\A_X$, where $X=V(\ell_u,\ell_v)$, for any $u\neq v\in\{1,\ldots,s\}$.

\medskip

The two cases above mean that ${\rm ker}(\phi_{\A})$ is isomorphic to the direct sum of $\Lambda_3(\A_X)$, where $X$ scans over all rank 2 flats of $\A$ with $|\A_X|\geq 3$. With Claim 1, this proves the desired equality $${\rm ker}(\phi_{\A})=\Lambda_3(\A).$$

\vskip .5in

Before concluding this subsection, it is worth observing the following facts. If $X$ is a rank 2 flat with $|\A_X|=2$, then obviously $\Lambda_3(\A_X)=0$. So we showed that $\displaystyle {\rm ker}(\phi_{\A})\cong\bigoplus_{X\in L_2(\A)}{\rm ker}(\phi_{\A_X})$, and from Claims 2 and 3, each direct summand is isomorphic to $\Lambda_3(\A_X)$.

In fact we obtained that the $R$-module ${\rm ker}(\phi_{\A})$ ``decomposes by localizations''. The prime ideals ${\frak p}_{i,j}:=\langle\ell_i,\ell_j\rangle$ are the associated primes of the $R$-module $\Lambda(\A)$. By localization we have two cases:

\begin{enumerate}
  \item If ${\frak p}_{i,j}\neq {\frak p}_{u,v}$, then ${\frak p}_{u,v}R_{{\frak p}_{i,j}}=R_{{\frak p}_{i,j}}$, and therefore $$\left(\frac{R}{{\frak p}_{u,v}}\right)_{{\frak p}_{i,j}}=0.$$
  \item If ${\frak p}_{i,j}={\frak p}_{u,v}$, then ${\frak p}_{u,v}R_{{\frak p}_{i,j}}={\frak p}_{i,j}R_{{\frak p}_{i,j}}$, and therefore $$\left(\frac{R}{{\frak p}_{u,v}}\right)_{{\frak p}_{i,j}}\simeq\mathbb K,$$ as $R$-modules.
\end{enumerate}

From the proof above, it becomes transparent that by localization at a prime ideal defining a rank 2 flat $X:=V({\frak p}_{i,j})$, one obtains ${\rm ker}(\phi_{\A_X})$, and therefore the decomposition $${\rm ker}(\phi_{\A})\simeq \bigoplus_{{\frak p}\in Ass_R(\Lambda(\A))}{\rm ker}(\phi_{\A})_{{\frak p}}.$$

\vskip .5in

If for any hyperplane arrangement $\A\subset R$, the $R$-module $R/I_{n-1}(\A)$ is (arithmetically) Cohen-Macaulay (see \cite[Lemma 3.2]{Sc}), by comparison, that is the case for $R/I_{n-2}(\A)$ only in special cases.

\begin{cor}\label{CM} Let $\A\subset R$ be a hyperplane arrangement with $|\A|=n\geq 2$. Then $R/I_{n-2}(\A)$ is (arithmetically) Cohen-Macaulay if and only if either
\begin{itemize}
  \item[(1)] if ${\rm rank}(\A)=2$, then $\displaystyle p(\A)={{n-1}\choose{2}}$, or
  \item[(2)] if ${\rm rank}(\A)\geq 3$, then $p(\A)=0$ (i.e., $\A$ is 3-generic, or any three linear forms of $\A$ are linearly independent).
\end{itemize}
\end{cor}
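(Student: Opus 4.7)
The plan is to apply the Auslander--Buchsbaum formula: $R/I_{n-2}(\A)$ is (arithmetically) Cohen--Macaulay if and only if $\operatorname{pd}_R(R/I_{n-2}(\A))=\operatorname{ht}(I_{n-2}(\A))$. Theorem \ref{theorem1} already supplies the minimal linear free resolution, so the projective dimension can be read off directly; the remaining task is to compute the height of $I_{n-2}(\A)$ geometrically and compare. I would treat the two rank regimes separately, since the answers (and hence the two clauses (1) and (2)) differ.

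For the projective dimension, the tail of the resolution of Theorem \ref{theorem1} is $R^{\binom{n-1}{2}-p(\A)}(-n)$, so it vanishes exactly when $p(\A)=\binom{n-1}{2}$. I would prove this equality is equivalent to $\operatorname{rank}(\A)=2$. The easy direction: if $\operatorname{rank}(\A)=2$, there is a unique rank-$2$ flat $X$ with $|\A_X|=n$, and Claim 1 of \S\ref{the_proof} gives $p(\A)=\binom{n-1}{2}$. For the converse, setting $t_X=|\A_X|-1$ and combining the formula $p(\A)=\sum\binom{t_X}{2}=\binom{n-1}{2}$ with the pair-counting identity $\sum_{X\in L_2(\A)}\binom{|\A_X|}{2}=\binom{n}{2}$ (each pair of hyperplanes lies in a unique rank-2 flat) yields simultaneously $\sum t_X=n-1$ and $\sum t_X^2=(n-1)^2=(\sum t_X)^2$. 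For nonnegative $t_X$, the latter forces at most one nonzero summand, and then necessarily $t_X=n-1$ for a single flat, so $\operatorname{rank}(\A)=2$. Consequently $\operatorname{pd}(R/I_{n-2}(\A))=2$ when $\operatorname{rank}(\A)=2$ and $\operatorname{pd}(R/I_{n-2}(\A))=3$ when $\operatorname{rank}(\A)\geq 3$.

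For the height I would identify $V(I_{n-2}(\A))$ geometrically. A point $p$ lies in $V(I_{n-2}(\A))$ iff for every pair $\{i,j\}\subseteq [n]$ some $\ell_u$ with $u\notin\{i,j\}$ vanishes at $p$; this is equivalent to $p$ lying on at least three hyperplanes of $\A$, so
\[
V(I_{n-2}(\A))\,=\,\bigcup_{S\subseteq [n],\,|S|=3} V(\ell_u:u\in S).
\]
Each summand has codimension $\operatorname{rank}\{\ell_u:u\in S\}\in\{2,3\}$ (not $1$, since no two linear forms are proportional). In the rank-$2$ case every triple is linearly dependent, giving $\operatorname{ht}(I_{n-2}(\A))=2$. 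In the rank-$\geq 3$ case, a rank-$2$ triple exists iff $\A$ admits a $3$-dependency iff $p(\A)>0$, so $\operatorname{ht}(I_{n-2}(\A))$ equals $3$ when $p(\A)=0$ and equals $2$ otherwise.

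Combining the two computations yields the corollary. When $\operatorname{rank}(\A)=2$, $\operatorname{pd}=\operatorname{ht}=2$, so $R/I_{n-2}(\A)$ is always Cohen--Macaulay; this is consistent with (1) since $p(\A)=\binom{n-1}{2}$ is automatic in this regime. When $\operatorname{rank}(\A)\geq 3$, $\operatorname{pd}=3$, so Cohen--Macaulayness is equivalent to $\operatorname{ht}(I_{n-2}(\A))=3$, which by the previous paragraph holds exactly when $p(\A)=0$; this is (2). The main obstacle is the geometric identification of $V(I_{n-2}(\A))$ as a union of triple intersections together with the Cauchy--Schwarz-type manipulation that bounds $p(\A)$ by $\binom{n-1}{2}$; everything else is a direct readoff from Theorem \ref{theorem1}.
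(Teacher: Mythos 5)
Your proposal is correct and follows the same overall blueprint as the paper---read the projective dimension off the resolution of Theorem~\ref{theorem1}, compute the height, and compare via Auslander--Buchsbaum---but it replaces two steps of the paper's argument with more self-contained ones. For the converse direction of the equivalence $p(\A)=\binom{n-1}{2}\Leftrightarrow\operatorname{rank}(\A)=2$, the paper derives $n_1+\cdots+n_r-r=n-1$ from the two binomial-sum identities (exactly as you do), but then invokes a separate inductive argument (Claim 4 in \S\ref{the_proof}) to rule out rank $\geq 3$. Your variant extracts \emph{both} $\sum t_X=n-1$ and $\sum t_X^2=(n-1)^2$ from the same two identities and then uses the elementary inequality $\sum t_X^2\leq(\sum t_X)^2$ (with equality for nonnegatives iff at most one term is nonzero, and here every $t_X=|\A_X|-1\geq 1$) to conclude $r=1$ directly. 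This buys you a one-line replacement for the paper's Claim 4 at the small cost of needing to observe $t_X\geq 1$ for every rank-2 flat. For the height, the paper cites the description of minimal primes from \cite{To}; you instead identify $V(I_{n-2}(\A))$ as the locus of points lying on at least three hyperplanes of $\A$ and compute the codimensions of the resulting triple intersections by hand, again making the proof self-contained. Both changes are local improvements; the skeleton of the argument is the paper's.
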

\begin{proof} From the minimal free resolution exhibited in Theorem \ref{theorem1}, the projective dimension is
$${\rm pdim}_R(R/I_{n-2}(\A))=\left\{
  \begin{array}{ll}
    2, & \hbox{if $p(\A)={{n-1}\choose{2}}$;} \\
    3, & \hbox{otherwise.}
  \end{array}
\right.$$

From proof of Claim 3, if ${\rm rank}(\A)=2$, then $\displaystyle p(\A)={{n-1}\choose{2}}$. For the converse, let $X_1,\ldots,X_r$ be all the distinct rank 2 flats of $L(\A)$, and suppose $|\A_{X_u}|=n_u$, for $u=1,\ldots,r$. Obviously, $n_u\geq 2$, and from Claim 1
$$\sum_{u=1}^r{{n_u-1}\choose{2}}={{n-1}\choose{2}}.$$ Also, for any $1\leq i<j\leq n$, we have that $V(\ell_i,\ell_j)$ is a rank 2 flat of $\A$, and therefore
$$\sum_{u=1}^r{{n_u}\choose{2}}={{n}\choose{2}}.$$ Subtracting these two equations one obtains
$$n_1+\cdots+n_r-r=n-1.$$ Modulo the Claim 4 below, this is true only when ${\rm rank}(\A)=2$ (in this case $r=1$ and $n_1=n$).

\medskip

\noindent$\bullet${\bf Claim 4:} If ${\rm rank}(\A)\geq 3$, then $n_1+\cdots+n_r-r\geq n$.

{\em Proof of Claim 4.} We will use induction on $|\A|=n\geq 3$. The base case, $n=3$, is immediate, since $\A$ will consist of three linearly independent linear forms, and so $r=3$, and $n_1=n_2=n_3=2$; indeed, $2+2+2-3=3\geq 3$.

For the induction step, suppose $|\A|\geq 4$, and let $\A':=\A\setminus\{\ell_n\}$. Suppose that for some $1\leq b\leq r$, $X_i\subset V(\ell_n)$ for $1\leq i\leq b$, and $X_j\nsubseteq V(\ell_n)$ for $b+1\leq j\leq r$. Also suppose for some $0\leq a\leq b$, $n_u=2$ for $0\leq u\leq a$, and $n_v\geq 3$ for $a+1\leq v\leq b$.

If ${\rm rank}(\A')\geq 3$, then by induction, the claimed inequality is true for $\A'$, so we have

$$(n_{a+1}-1)+\cdots+(n_b-1)+n_{b+1}+\cdots+n_r-(r-a)\geq n-1,$$ which give $$2a+n_{a+1}+\cdots+n_r-r\geq n-1+b.$$ Since $n_1+\cdots+n_a=2a$, and $b\geq 1$, the method of induction proves the inequality.

If ${\rm rank}(\A')=2$, then, since ${\rm rank}(\A)=3$, after a change of variables we have that in $\mathbb P^2$, $\A'$ is a pencil of $n-1$ lines through a point, and $\ell_n$ is a line that misses that point. So $r=n$, and $n_1=n-1$, and $n_i=2$ for $2\leq i\leq n$. With this, we have $$n_1+\cdots+n_r-r=(n-1)+2(n-1)-n=2n-3\geq n.$$

\medskip

From the beginning of \cite[Section 2]{To}, any minimal prime of $I_{n-2}(\A)$ is of the form $\langle \ell_u,\ell_v,\ell_w\rangle$, for some (any) $1\leq u<v<w\leq n$. So ${\rm ht}(I_{n-2}(\A))=3$, if and only if $\Lambda_3(\A)=0$. But this is equivalent to $p(\A)=0$.
\end{proof}

\subsubsection{The case of three variables, $k=3$.} \label{3vars} Let $\A=\{\ell_1,\ldots,\ell_n\}\subset R:=\mathbb K[x,y,z]$ be a line arrangement in $\mathbb P^2$, and suppose ${\rm rank}(\A)=3$. Let ${\frak m}:=\langle x,y,z\rangle$. Then, all the flats of rank 2 in $L(\A)$ correspond to the points of intersection among the lines of $\A$, say $P_1,\ldots, P_s$ (distinct). Often, this set is denoted $Sing(\A)$, and it is called {\em the singularity locus of $\A$}.

For $i=1,\ldots,s$, let $n_i:=|\A_{P_i}|$ be the number of lines of $\A$ intersecting at $P_i$.

Lemmas 3.1 and 3.2 in \cite{Sc} say that $I_{n-1}(\A)$ has primary decomposition $$I_{n-1}(\A)=I(P_1)^{n_1-1}\cap\cdots\cap I(P_s)^{n_s-1}.$$
In this section we are interested in finding a similar primary decomposition, but for $I_{n-2}(\A)$.

\medskip

First suppose that $\A$ is generic (i.e., any three of the linear forms of $\A$ are linearly independent). Let $\mathcal C_{\A}$ be the linear code with generating matrix $G$ having columns dual to the linear forms defining $\A$. This code has length $n$ and dimension 3. Since any three of the linear forms of $\A$ are linearly independent, the maximum number of columns of $G$ that span a $3-1=2$ dimensional vector space is 2. So the minimum distance of $\mathcal C_{\A}$ is $n-2$ (see, e.g., \cite[Remark 2.2]{ToVa}). But in these condition, by \cite[Theorem 3.1]{To}, we have that $I_{n-2}(\A)={\frak m}^{n-2}$, which is primary.

\medskip

Suppose $\A$ is not generic. Then ${\rm ht}(I_{n-2}(\A))=2$. Since ${\rm ht}(I_{n-1}(\A))=2$, and ${\rm ht}(I_n(\A))=1$, by Propositions 2.2 and 2.3 in \cite{AnGaTo}, we have $$I_{n-2}(\A)=I(P_1)^{n_1-2}\cap\cdots\cap I(P_s)^{n_s-2}\cap K,$$ where $K$ is ${\frak m}$-primary ideal. Also, for any ideal $I\subset R$, by convention $I^0=R$.

Let $I$ be an ideal of $R$. Then the {\em saturation of $I$} is the ideal $$I^{sat}:=\{f\in R|{\frak m}^{v_f}\cdot f\in I,\mbox{ for some integer }v_f\geq 0\}.$$ Obviously, $I\subseteq I^{sat}$.

Since $\sqrt{K}={\frak m}$, there exists an integer $w\geq 1$, such that ${\frak m}^w\subseteq K$. So $K^{sat}=R$. For any $j=1,\ldots, s$, $I(P_j)$ is a linear prime ideal of height 2. So there exists one generator $L_j$ of ${\frak m}$ that does not belong to $I(P_j)$. But then, $I(P_j)^{n_j-2}:L_j=I(P_j)^{n_j-2}$, giving that $(I(P_j)^{n_j-2})^{sat}=I(P_j)^{n_j-2}$. These, together with the formula $(I\cap J)^{sat}=I^{sat}\cap J^{sat}$, give

$$I_{n-2}(\A)^{sat}=I(P_1)^{n_1-2}\cap\cdots\cap I(P_s)^{n_s-2}.$$

For any ideal $I$ of $R$, by definition, $${\rm H}_{\frak m}^0(R/I)=\{\hat{f}\in R/I|{\frak m}^{v_f}\cdot f\in I\},$$ so ${\rm H}_{\frak m}^0(R/I)=I^{sat}/I$.

Denote, $M:={\rm H}_{\frak m}^0(R/I_{n-2}(\A))$. By Theorem \ref{theorem1}, ${\rm reg}(R/I_{n-2}(\A))=n-3$. Therefore, by Corollaries 4.5 and 4.4 in \cite{Ei}, $\max\{d|M_d\neq 0\}\leq n-3$, which leads to $$(I_{n-2}(\A)^{sat}/I_{n-2}(\A))_e=0, \mbox{ for any }e\geq n-2.$$ This means that $$I_{n-2}(\A)=I_{n-2}(\A)^{sat}\cap{\frak m}^{n-2}.$$

Considering that if $\A$ is generic, then $n_j=2$ for all $j=1,\ldots,s$, putting everything together we have the following result

\begin{prop}\label{primaryDecomp} Let $\A$ be an essential line arrangement in $\mathbb P^2$. Suppose $|\A|=n$, and $Sing(\A)=\{P_1,\ldots,P_s\}$, with $n_j=|\A_{P_j}|, j=1,\ldots,s$. Then, in $R=\mathbb K[x,y,z]$, we have the primary decomposition

$$I_{n-2}(\A)=I(P_1)^{n_1-2}\cap\cdots\cap I(P_s)^{n_s-2}\cap \langle x,y,z\rangle^{n-2}.$$
\end{prop}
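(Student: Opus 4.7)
The plan is to split the proof according to whether $\A$ is generic (any three linear forms linearly independent) or not, reducing each case to facts established earlier in this section together with Theorem \ref{theorem1}.

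In the generic case, every singular point lies on exactly two lines, so $n_j=2$ for all $j$ and each factor $I(P_j)^{n_j-2}=R$ by the convention $I^0=R$; the claimed intersection therefore collapses to ${\frak m}^{n-2}$. On the other hand, the linear code $\mathcal C_{\A}$ has minimum distance $n-2$ (no $n-3$ of its $n$ columns can span a $2$-dimensional subspace of $\mathbb K^3$), so by \cite[Theorem 3.1]{To} one gets $I_{n-2}(\A)={\frak m}^{n-2}$. The two sides agree.

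In the non-generic case, ${\rm ht}(I_{n-2}(\A))=2$, so Propositions 2.2 and 2.3 of \cite{AnGaTo} produce a primary decomposition of the form
$$I_{n-2}(\A) = I(P_1)^{n_1-2} \cap \cdots \cap I(P_s)^{n_s-2} \cap K,$$
with $K$ either $R$ or ${\frak m}$-primary. Saturating at ${\frak m}$ removes the ${\frak m}$-primary component, and since each $I(P_j)^{n_j-2}$ is already ${\frak m}$-saturated (any generator of ${\frak m}$ outside the height-$2$ prime $I(P_j)$ acts as a non-zero-divisor on $R/I(P_j)^{n_j-2}$), one obtains
$$I_{n-2}(\A)^{sat} = I(P_1)^{n_1-2} \cap \cdots \cap I(P_s)^{n_s-2}.$$
The decisive input is Theorem \ref{theorem1}, which gives ${\rm reg}(R/I_{n-2}(\A))=n-3$; by \cite[Corollaries 4.4 and 4.5]{Ei} this forces the local cohomology module $H^0_{\frak m}(R/I_{n-2}(\A))=I_{n-2}(\A)^{sat}/I_{n-2}(\A)$ to vanish in all degrees $\geq n-2$. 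Combining this with the containment $I_{n-2}(\A)\subseteq {\frak m}^{n-2}$ (valid since $I_{n-2}(\A)$ is generated in degree $n-2$) yields $I_{n-2}(\A)=I_{n-2}(\A)^{sat}\cap {\frak m}^{n-2}$, and substituting the saturation formula above produces the stated primary decomposition.

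The main obstacle is pinning the embedded ${\frak m}$-primary component $K$ down as exactly ${\frak m}^{n-2}$ rather than some smaller ${\frak m}$-primary ideal. Existence of such an embedded component follows from the general primary decomposition machinery of \cite{AnGaTo}, but identifying its precise shape requires a sharp vanishing of local cohomology above degree $n-3$, which is exactly what the linear-resolution result of Theorem \ref{theorem1} delivers. Without the optimal regularity bound one would only recover a decomposition of the form $I_{n-2}(\A)^{sat}\cap L$ for some unknown ${\frak m}$-primary $L$ containing ${\frak m}^{n-2}$, so Theorem \ref{theorem1} is the essential ingredient that closes the proof.
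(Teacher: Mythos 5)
Your proof is correct and follows essentially the same route as the paper: the generic case via the coding-theory identification $I_{n-2}(\A)={\frak m}^{n-2}$, the non-generic case via \cite[Propositions 2.2--2.3]{AnGaTo} for the non-embedded components, saturation to strip $K$, and the regularity bound from Theorem~\ref{theorem1} together with \cite[Corollaries 4.4, 4.5]{Ei} to pin down the embedded component as ${\frak m}^{n-2}$. The only nitpick is the parenthetical "no $n-3$ of its $n$ columns can span a $2$-dimensional subspace," which should really say that at most two columns lie in any $2$-dimensional subspace (whence $d=n-2$), but this is a minor slip in phrasing that does not affect the argument.
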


\section{Orlik-Terao algebra of the second order}\label{OT}

Let $\A=\{\ell_1,\ldots,\ell_n\}\subset R:=\mathbb K[x_1,\ldots,x_k]$. According to \cite[Example 2.2(iii)]{GaSiTo}, for $S_{i,j}:=[n]\setminus\{i,j\}, 1\leq i<j\leq n$, and $\mathfrak S:=\{\ldots,S_{i,j},\ldots\}$, we define {\em the Orlik-Terao algebra of the second order of $\A$} to be $$OT(2,\A):=OT(\mathfrak S,\A)=\mathbb K\left[\ldots,\frac{1}{\ell_i\ell_j},\ldots\right].$$

In this section we study the first properties of this algebra, also making some links, if they exist, with $OT(\A)$, the (classical) Orlik-Terao algebra of $\A$.

From \cite[Proposition 2.3 and Theorem 2.4]{GaSiTo}, with the notations at the beginning of Section 2, i.e., $f:=\ell_1\cdots\ell_n$, and $f_{i,j}:=f/(\ell_i\ell_j), 1\leq i<j\leq n$, we have the following isomorphisms of graded $\mathbb K-$algebra

$$OT(2,\A)\simeq\mathbb K[\ldots,f_{i,j},\ldots]\simeq\mathcal F(I_{n-2}(\A)),$$ where $\mathcal F(I_{n-2}(\A))$ is the special fiber of the ideal $I_{n-2}(\A)$.

Denote ${\bf T}:=\mathbb K[\ldots,t_{i,j},\ldots], 1\leq i<j\leq n$. Because of the above isomorphism with the special fiber, the defining ideal of $OT(2,\A)$ is $$I(2,\A):=\{F\in {\bf T}| F(\ldots,f_{i,j},\ldots)=0\},$$ and so $OT(2,\A)\simeq {\bf T}/I(2,\A)$.

\begin{prop} \label{dimension} If ${\rm rank}(\A)=k\geq 2$ (i.e., $\A$ is essential), then the Krull dimension of $OT(2,\A)$, and therefore the analytic spread of $I_{n-2}(\A)$, equals $k$.
\end{prop}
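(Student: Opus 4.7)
The plan is to reduce the computation to the Krull dimension of the subalgebra $S := \mathbb K[\ldots,f_{i,j},\ldots] \subset R$, invoking the isomorphism $OT(2,\A) \simeq S \simeq \mathcal F(I_{n-2}(\A))$ already recorded just before the statement. Since $S$ is a finitely generated $\mathbb K$-algebra domain, $\dim S = \mathrm{tr.deg}_{\mathbb K} \mathrm{Frac}(S)$, and the claim about the analytic spread will follow at once from $\ell(I_{n-2}(\A)) = \dim \mathcal F(I_{n-2}(\A))$.

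The upper bound is immediate: $S \subset R$ gives $\mathrm{Frac}(S) \hookrightarrow \mathbb K(x_1,\ldots,x_k)$, so $\mathrm{tr.deg}_{\mathbb K}\mathrm{Frac}(S) \leq k$.

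For the lower bound (in the nontrivial case $n \geq 3$), I would use essentiality to move a basis of linear forms onto the coordinate axes: reorder $\A$ and perform a linear change of variables so that $\ell_i = x_i$ for $i = 1,\ldots,k$. In $\mathrm{Frac}(S)$ one then has the ratios
\[
\frac{f_{1,j}}{f_{1,2}} = \frac{\ell_2}{\ell_j} = \frac{x_2}{x_j} \quad (j=3,\ldots,k), \qquad \frac{f_{2,3}}{f_{1,3}} = \frac{\ell_1}{\ell_2} = \frac{x_1}{x_2},
\]
which produce the $k-1$ explicit elements $x_2/x_1,\ x_2/x_3,\ \ldots,\ x_2/x_k$. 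They are algebraically independent over $\mathbb K$ because they generate the degree-zero subfield of $\mathbb K(x_1,\ldots,x_k)$ under the standard grading, and that subfield has transcendence degree exactly $k-1$. To reach the $k$th algebraically independent element I would adjoin $f_{1,2} = \ell_3 \cdots \ell_n \in S$ itself: it is homogeneous of positive degree $n-2$, so it cannot be algebraic over the degree-zero subfield — any putative algebraic relation would be impossible to balance degree-by-degree in the $\mathbb Z$-graded field $\mathbb K(x_1,\ldots,x_k)$.

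The main obstacle is simply identifying ratios of generators of $S$ that land in $\mathrm{Frac}(S)$ and recover $k-1$ degree-zero transcendentals. Once one observes that $f_{i,j}/f_{u,v} = \ell_u\ell_v/(\ell_i\ell_j)$ encodes the projective coordinates of $[\ell_1:\cdots:\ell_n]$, essentiality delivers the $k-1$ projective transcendentals, and the grading argument furnishes the last one to push $\mathrm{tr.deg}_{\mathbb K}\mathrm{Frac}(S) \geq k$. Combining both bounds gives the equality, and hence also the equality $\ell(I_{n-2}(\A)) = k$.
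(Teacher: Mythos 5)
Your proof is correct, and it takes a genuinely different route from the paper for the lower bound. After reducing to a transcendence-degree computation for $\mathrm{Frac}(S)$ (same as the paper) and obtaining the upper bound $\leq k$ from $S\subset R$, the paper gets the lower bound by observing that after normalizing $\ell_i=x_i$ for $i\leq k$, the subfield $\mathbb K(\ldots,x_ix_j,\ldots)$ is the fraction field of the edge ring $\mathbb K[G]$ of the complete graph on $k$ vertices, and then citing Villarreal (\cite[Corollary 10.1.21]{Vi}) for $\dim\mathbb K[G]=k$. You instead build $k$ explicit transcendentals: the $k-1$ degree-zero ratios $x_2/x_1,x_2/x_3,\ldots,x_2/x_k$ (realized as quotients $f_{i,j}/f_{u,v}$, hence in $\mathrm{Frac}(S)$), which generate the degree-zero subfield $K_0$ of $\mathbb K(x_1,\ldots,x_k)$ and are therefore a transcendence basis of $K_0$ over $\mathbb K$, and then $f_{1,2}$ itself, which is nonzero homogeneous of positive degree. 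Your approach is more elementary and self-contained, avoiding external facts about monomial edge subrings, and it applies uniformly whenever $n\geq 3$; the paper's citation of the edge ring of the complete graph $K_k$ is delicate at $k=2$ (the edge ring of a single edge has dimension $1$, not $2$), a case your explicit construction handles without incident.

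One small point worth tightening: the final step, asserting that the positive-degree element $f_{1,2}$ cannot be algebraic over $K_0$ by ``degree balancing,'' is correct but informally stated, since $\mathbb K(x_1,\ldots,x_k)$ is not a graded ring in the naive sense. A clean way to phrase it: writing $f_{1,2}=x_1^{\,n-2}\cdot h$ with $h=\prod_{j\geq 3}(\ell_j/x_1)\in K_0\setminus\{0\}$, if $f_{1,2}$ were algebraic over $K_0$ then so would $x_1^{\,n-2}$ be, hence $x_1$; but $\mathbb K(x_1,\ldots,x_k)=K_0(x_1)$ is purely transcendental of degree one over $K_0$ (as $\mathrm{tr.deg}_{\mathbb K}K_0=k-1$), so $x_1$ is transcendental over $K_0$, a contradiction. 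With that adjustment the argument is airtight.
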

\begin{proof} We have $\displaystyle OT(2,\A)=\mathbb K\left[\ldots,\frac{1}{\ell_i\ell_j},\ldots\right]$, which is an integral domain. Then the total field of fractions is $Q(OT(2,\A))=\mathbb K(\ldots,\ell_i\ell_j,\ldots)$.

Since $\A$ is essential, after a change of variables we can suppose that $\ell_i=x_i$, for $i=1,\ldots,k$. We have the following sequence of inclusions:

$$\mathbb K(\ldots,x_ix_j,\ldots)\subseteq Q(OT(2,\A))\subset \mathbb K(x_1,\ldots,x_k),$$ where the left-most field is the field of fraction of $\mathbb K[G]$, where $G$ is the complete graph on $k$ vertices. By \cite[Corollary 10.1.21]{Vi}, the Krull dimension of $\mathbb K[G]$ is $k$, and therefore ${\rm tr.deg}_{\mathbb K}\mathbb K(\ldots,x_ix_j,\ldots)=k$. Since ${\rm tr.deg}_{\mathbb K}\mathbb K(x_1,\ldots,x_k)=k$, we obtain $${\rm tr.deg}_{\mathbb K}Q(OT(2,\A))=k,$$ and hence, the result.
\end{proof}

Since $I_{n-2}(\A)$ is linearly presented, an immediate consequence of Proposition \ref{dimension}, via \cite[Theorem 3.2]{DoHaSi}, and under the assumption that $\A$ is essential, is that the rational map $$\mathbb P^{k-1}\dashrightarrow \mathbb P^{m-1}, [x_1,\ldots,x_k]\mapsto [\ldots,1/(\ell_i\ell_j),\ldots], 1\leq i<j\leq n,$$ where $\displaystyle m={{n}\choose{2}}$, is birational onto its image.

\vskip .5in

For the remainder of this article, we focus our attention on the generators of $I(2,\A)$. First, we have the following lemma.

\begin{lem}\label{standard_generators} The generators of $I(2,\A)$ include the following standard elements:
\begin{enumerate}
  \item LINEAR: If $c_{i_1}\ell_{i_1}+c_{i_2}\ell_{i_2}+c_{i_3}\ell_{i_3}=0$, $1\leq i_1<i_2<i_3\leq n$, is a 3-dependency, then $$\underbrace{c_{i_1}t_{i_2,i_3}+c_{i_2}t_{i_1,i_3}+c_{i_3}t_{i_1,i_2}}_{L_{i_1,i_2,i_3}}\in I(2,\A).$$
  \item QUADRATIC: If $n\geq 4$, then for any $1\leq u<v<w<z\leq n$ $$\underbrace{t_{u,v}t_{w,z}-t_{u,w}t_{v,z}}_{Q_{u,v,w,z}^1},\underbrace{t_{u,v}t_{w,z}-t_{u,z}t_{v,w}}_{Q_{u,v,w,z}^2}\in I(2,\A).$$
\end{enumerate}
\end{lem}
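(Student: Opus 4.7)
The statement asks only that the listed elements lie in $I(2,\A)$, which is a matter of direct substitution: an element $F\in\mathbf{T}$ lies in $I(2,\A)$ iff $F(\ldots,f_{i,j},\ldots)=0$ as a rational function (indeed a polynomial) in $R$. So the plan is simply to evaluate $L_{i_1,i_2,i_3}$ and $Q^1_{u,v,w,z},Q^2_{u,v,w,z}$ at $t_{i,j}\mapsto f_{i,j}=f/(\ell_i\ell_j)$ and check that the results are zero.

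For the linear generators, I would use exactly the relation already exploited in the discussion preceding Lemma~\ref{kernel}. Given a $3$-dependency $c_{i_1}\ell_{i_1}+c_{i_2}\ell_{i_2}+c_{i_3}\ell_{i_3}=0$, multiply through by $F_{i_1,i_2,i_3}:=f/(\ell_{i_1}\ell_{i_2}\ell_{i_3})$. Each term $c_{i_a}\ell_{i_a}F_{i_1,i_2,i_3}$ becomes $c_{i_a}f_{i_b,i_c}$, where $\{a,b,c\}=\{1,2,3\}$, so the identity collapses to $c_{i_1}f_{i_2,i_3}+c_{i_2}f_{i_1,i_3}+c_{i_3}f_{i_1,i_2}=0$ in $R$. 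This is exactly $L_{i_1,i_2,i_3}(\ldots,f_{i,j},\ldots)=0$, so $L_{i_1,i_2,i_3}\in I(2,\A)$.

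For the quadratic generators, the key observation is that for any four distinct indices $u,v,w,z$ the three products
\[
f_{u,v}f_{w,z},\qquad f_{u,w}f_{v,z},\qquad f_{u,z}f_{v,w}
\]
all equal $f^2/(\ell_u\ell_v\ell_w\ell_z)$, since multiplication in $R$ is commutative and each product omits exactly the factors $\ell_u,\ell_v,\ell_w,\ell_z$ (each one twice). Consequently $Q^1_{u,v,w,z}$ and $Q^2_{u,v,w,z}$ both vanish upon the substitution $t_{i,j}\mapsto f_{i,j}$, and hence lie in $I(2,\A)$.

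There is essentially no obstacle here; the lemma is purely a verification that catalogues the ``obvious'' binomial and linear relations among the generators $f_{i,j}$ so they can be referenced later. The genuine content of the surrounding discussion, namely whether the $L$'s and $Q$'s (together with relations obtainable from $I(\A)\cap\mathbb{K}[\ldots,y_iy_j,\ldots]$ via Sylvester forms) actually \emph{generate} $I(2,\A)$, is deliberately not addressed by this lemma and is flagged as a conjecture elsewhere in the paper.
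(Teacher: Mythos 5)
Your proof is correct and uses exactly the approach the paper does: for the linear relations, multiply the $3$-dependency by $f/(\ell_{i_1}\ell_{i_2}\ell_{i_3})$ (the paper writes this as $\prod_{j\in[n]\setminus\{i_1,i_2,i_3\}}\ell_j$), and for the quadratic ones, observe that $f_{u,v}f_{w,z}=f_{u,w}f_{v,z}=f_{u,z}f_{v,w}$ by commutativity. Nothing is missing; this matches the paper's proof of Lemma~\ref{standard_generators}.
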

\begin{proof} The linear generators are obtained from multiplying the dependency by $\displaystyle \prod_{j\in[n]\setminus\{i_1,i_2,i_3\}}\ell_j$, obtaining $c_{i_1}f_{i_2,i_3}+c_{i_2}f_{i_1,i_3}+c_{i_3}f_{i_1,i_2}=0$.

The quadratic generators are obtained from the commutativity of the products of linear forms.
\end{proof}

Denote ${\bf S}:=\mathbb K[y_1,\ldots,y_n]$, and denote $I(\A)$ to be the defining ideal of the (usual) Orlik-Terao algebra, i.e., $OT(\A)\simeq {\bf S}/I(\A)$.

We have the following first result.

\begin{prop}\label{connection} The map $$OT(2,\A)\longrightarrow OT(\A),\, t_{i,j}\mapsto y_iy_j$$ is a well-defined embedding of algebras.
\end{prop}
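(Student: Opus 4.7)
The plan is to realize $\bar\psi$ as the translation, through the two natural presentations, of the tautological inclusion of $OT(2,\A)$ as a subalgebra of $OT(\A)$ inside $\mathbb K(x_1,\ldots,x_k)$. Let $\psi\colon \mathbf{T}\to \mathbf{S}$ be the $\mathbb K$-algebra homomorphism determined on generators by $\psi(t_{i,j}):=y_iy_j$, and let $\rho\colon \mathbf{T}\twoheadrightarrow OT(2,\A)$ and $\sigma\colon \mathbf{S}\twoheadrightarrow OT(\A)$ be the standard presentations, with kernels $I(2,\A)$ and $I(\A)$ respectively. Since $\frac{1}{\ell_i\ell_j}=\frac{1}{\ell_i}\cdot\frac{1}{\ell_j}\in OT(\A)$, we have $OT(2,\A)\subseteq OT(\A)$ as subalgebras of $\mathbb K(x_1,\ldots,x_k)$; call this inclusion $\iota$.

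The key identity to verify is $\sigma\circ\psi=\iota\circ\rho$. Since both sides are $\mathbb K$-algebra homomorphisms out of $\mathbf{T}$, it suffices to evaluate on the generators $t_{i,j}$: each composition sends $t_{i,j}$ to $1/(\ell_i\ell_j)$. From this, for every $F\in I(2,\A)=\ker(\rho)$ we get $\sigma(\psi(F))=\iota(\rho(F))=0$, so $\psi(F)\in\ker(\sigma)=I(\A)$. Thus $\psi(I(2,\A))\subseteq I(\A)$, and $\psi$ descends to a well-defined $\mathbb K$-algebra homomorphism $\bar\psi\colon OT(2,\A)\to OT(\A)$ sending (the class of) $t_{i,j}$ to (the class of) $y_iy_j$.

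Injectivity is then immediate: under the identifications $\mathbf{T}/I(2,\A)\simeq OT(2,\A)$ and $\mathbf{S}/I(\A)\simeq OT(\A)$, the commutativity of the square above says precisely that $\bar\psi$ is the subalgebra inclusion $\iota$, which is visibly injective. There is no substantive obstacle here: the entire content of the statement is the observation that the chosen generators of $OT(2,\A)$ are, by construction, literal products of the generators of $OT(\A)$, so the embedding already exists at the level of the ambient field $\mathbb K(x_1,\ldots,x_k)$ and only has to be lifted to the presentation rings $\mathbf{T}$ and $\mathbf{S}$.
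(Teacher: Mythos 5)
Your proof is correct and takes essentially the same approach as the paper's: both rest on the single identity $\frac{1}{\ell_i\ell_j}=\frac{1}{\ell_i}\cdot\frac{1}{\ell_j}$, which forces $\psi(I(2,\A))\subseteq I(\A)$ and identifies the induced map with the tautological inclusion. Your commutative-diagram phrasing is a cleaner bookkeeping device; the paper's explicit manipulation (restricting to homogeneous $F$ and multiplying by $f^d$) is doing the same work, namely passing between the $f_{i,j}$-presentation and the $1/(\ell_i\ell_j)$-presentation of $OT(2,\A)$, which you sidestep by working directly with the reciprocals.
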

\begin{proof} First we show ``well-defined''. Let $F\in I(2,\A)$ be homogeneous in the $t_{i,j}$ variables, of degree $d$. Then
$$F(\ldots,f_{i,j},\ldots)=F\left(\ldots,\frac{f}{\ell_i\ell_j},\ldots\right)=0.$$ Multiplying this by $f^d$, and distributing $f$ with the appropriate powers, we obtain $$F\left(\ldots,\frac{f}{\ell_i}\cdot\frac{f}{\ell_j},\ldots\right)=0,$$ which means that $F(\ldots,y_iy_j,\ldots)\in I(\A)\subset{\bf S}$.

The ``embedding'' part of the statement follows the reverse argument: let $F\in {\bf T}$ be homogeneous in the $t_{i,j}$ variables, of degree $d$, such that $F(\ldots,y_i y_j,\ldots)\in I(\A)$. Then, $$F\left(\ldots,\frac{f}{\ell_i}\cdot\frac{f}{\ell_j},\ldots\right)=0.$$ Taking $f^d$ common factor from all the terms, after regrouping we obtain $$F\left(\ldots,\frac{f}{\ell_i\ell_j},\ldots\right)=F(\ldots,f_{i,j},\ldots)=0,$$ hence $F\in I(2,\A)$.
\end{proof}

Observe that the linear elements obtained in Lemma \ref{standard_generators}, via the map in Proposition \ref{connection}, give the well-known quadratic elements $c_{i_1}y_{i_2}y_{i_3}+c_{i_2}y_{i_1}y_{i_3}+c_{i_3}y_{i_1}y_{i_2}$ of $I(\A)$. The quadratic generators obtained there give only that $0\in I(\A)$.

\medskip

Let $J(\A)$ be the ideal of ${\bf S}$, defined as: $$J(\A):=\{F(\ldots,y_iy_j,\ldots)|F\in I(2,\A)\subset{\bf T}\}.$$ From Proposition \ref{connection} we have $J(\A)\subseteq I(\A)$.

Consider the subring ${\bf S'}:=\mathbb K[\ldots,y_iy_j,\ldots]\subset {\bf S}$, and let $J'(\A):=J(\A)\cap {\bf S'}$. Naturally, ${\bf S'}$ is isomorphic as a graded $\mathbb K$-algebra to ${\bf T}$ quotient by the standard quadratic generators presented in Lemma \ref{standard_generators}.

\begin{prop} \label{properties} We have the following properties:
\begin{enumerate}
  \item In ${\bf S}$, one has $J(\A):\langle y_1,\ldots,y_n\rangle=I(\A)$.
  \item $J'(\A)=I(\A)\cap {\bf S'}$.
  \item $\displaystyle OT(2,\A)\simeq \frac{{\bf S'}}{J'(\A)}$.
\end{enumerate}
\end{prop}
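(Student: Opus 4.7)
The plan is to exploit the $\mathbb K$-algebra homomorphism $\phi\colon {\bf T}\to{\bf S}$ sending $t_{i,j}\mapsto y_iy_j$, which surjects onto ${\bf S'}$. The cornerstone observation, essentially a reformulation of Proposition \ref{connection}, is that $\phi^{-1}(I(\A))=I(2,\A)$: the composition ${\bf T}\xrightarrow{\phi}{\bf S}\twoheadrightarrow OT(\A)$ equals the evaluation $t_{i,j}\mapsto 1/(\ell_i\ell_j)$, which factors through the injection $OT(2,\A)\hookrightarrow OT(\A)$ and hence has kernel $I(2,\A)$. Surjectivity of $\phi$ onto ${\bf S'}$ then yields $\phi(I(2,\A))=I(\A)\cap{\bf S'}$ and the inclusion $\ker\phi\subseteq I(2,\A)$ for free.

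For (2), I would sandwich $J'(\A)$ between $\phi(I(2,\A))$ and $I(\A)\cap{\bf S'}$: the former is contained in $J(\A)\cap{\bf S'}=J'(\A)$ because by definition its elements generate $J(\A)$ and lie in ${\bf S'}$, while the latter contains $J'(\A)$ since $J(\A)\subseteq I(\A)$ by Proposition \ref{connection}. The cornerstone collapses the two bounds. For (3), composing $\phi$ with the quotient ${\bf S'}\twoheadrightarrow{\bf S'}/J'(\A)$ gives a surjection ${\bf T}\twoheadrightarrow{\bf S'}/J'(\A)$ whose kernel equals $I(2,\A)+\ker\phi=I(2,\A)$, yielding the isomorphism ${\bf T}/I(2,\A)\cong{\bf S'}/J'(\A)$.

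Part (1) is where the real work lies. Writing $\mathfrak n=\langle y_1,\ldots,y_n\rangle$, the inclusion $J(\A):\mathfrak n\subseteq I(\A)$ is immediate: $I(\A)$ is prime (since $OT(\A)$ is a domain) and $y_1\not\in I(\A)$, so $J(\A):\mathfrak n\subseteq J(\A):y_1\subseteq I(\A):y_1=I(\A)$. For the reverse I would reduce to showing $y_k\partial_C\in J(\A)$ for every circuit $C$ of $\A$ and every index $k$, since $I(\A)$ is generated by the boundaries $\partial_C=\sum_{i\in C}c_i\prod_{j\in C,\,j\neq i}y_j$. The plan is a parity case analysis on $|C|$: if $|C|$ is odd, each monomial of $\partial_C$ is a product of the even number $|C|-1$ of distinct variables, hence pairs up to place $\partial_C$ itself in ${\bf S'}\cap I(\A)=J'(\A)$ by (2); if $|C|$ is even then $|C|\geq 4$ and every monomial of $y_k\partial_C$ has even total degree with at most one squared variable, hence decomposes as a product of pairs, putting $y_k\partial_C$ in $J'(\A)$ by (2).

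The main obstacle is the monomial bookkeeping in the even-$|C|$ case: when $k\in C$, monomials of the form $y_{i_j}^2\prod_{l\in C\setminus\{i_j,i_m\}}y_l$ only factor as $(y_{i_j}y_{i_a})(y_{i_j}y_{i_b})$ times pairs of the remaining distinct variables once $|C|\geq 4$. This is precisely where the hypothesis $\gcd(\ell_i,\ell_j)=1$ is crucial, ruling out two-element ``circuits'' that would otherwise break the argument.
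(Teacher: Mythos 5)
Your proposal is correct and takes essentially the same route as the paper: the heart of both arguments for part (1) is the parity case analysis on circuit size together with the pairing of $y$-variables into products $y_ay_b$, and the $\subseteq$ direction via primeness and non-degeneracy of $I(\A)$ is identical. The only difference is organizational: you prove (2) and (3) first via the surjection $\phi\colon{\bf T}\twoheadrightarrow{\bf S'}$ and the identity $\phi^{-1}(I(\A))=I(2,\A)$, and then use (2) to certify membership in $J(\A)$, whereas the paper proves (1) directly by exhibiting the paired expression as an element of $I(2,\A)$ and declares (2), (3) immediate from Proposition~\ref{connection}. Your arrangement is a bit cleaner but not a different proof.
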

\begin{proof} To prove (1), suppose $c_{i_1}\ell_{i_1}+\cdots+c_{i_s}\ell_{i_s}=0$ is a minimal dependency in $\A$. This leads to the generator $$G_{i_1,\ldots,i_s}:=c_{i_1}y_{i_2}\cdots y_{i_s}+\cdots+ c_{i_s}y_{i_1}\cdots y_{i_{s-1}}$$ of $I(\A)$ (see \cite{OrTe}). We have two cases:

\begin{itemize}
  \item If $s=2a+1$, then $s-1$ is even, and in each term we can group pairs of $y$'s with different indices to obtain an element of $I(2,\A)$ of degree $a=(s-1)/2$.
  \item If $s=2a$, then $s-1$ is odd. But we can multiply this generator of $I(\A)$, by any $y_1,\ldots,y_n$, to obtain via pairings similar as above (of course, making sure that if $y_j$ shows twice in a term, we don't pair it with itself), to obtain elements of $I(2,\A)$, of degree $a$.
\end{itemize}

We can see that a generator of $I(\A)$ either by itself is in $J(\A)$, or multiplied by a variable $y_j$; this gives the inclusion ``$\supseteq$''. For the inclusion ``$\subseteq$'', if $y_j\cdot F\in J(\A)\subseteq I(\A)$, and since $I(\A)$ is non-degenerate prime ideal (see \cite[Corollary 2.2]{ScTo}), then $F\in I(\A)$.

The last two statements are immediate from Proposition \ref{connection}.
\end{proof}

If $n=3$ and ${\rm rank}(\A)=3$, or if $n=2$ and ${\rm rank}(\A)=2$, since $I(\A)=0$ in both cases, then $I(2,\A)=0$ as well in both cases (we have $n<4$).

\subsection{Generators of the ideal of the Orlik-Terao algebra of the second order.} In the proof of Proposition \ref{properties} we give a glimpse of a standard way to find elements of $I(2,\A)$, that together with the elements obtained in Lemma \ref{standard_generators} will form a generating set for $I(2,\A)$.

Let us consider again the element $G_{i_1,\ldots,i_s}\in I(\A)$ corresponding to the circuit $\{i_1,\ldots,i_s\}, s\geq 3$. The whole idea is to multiply $G_{i_1,\ldots,i_s}$ by a convenient monomial $M\in{\bf S}$ (possibly $1$) such that in each term of this product, to be able to pair any two $y$'s with distinct indices (i.e., $MG_{i_1,\ldots,i_s}\in {\bf S'}$). Obviously $\deg(M)+s-1$ must be an even number, and if variable $y_a$ shows up in a term of $MG_{i_1,\ldots,i_s}$ with exponent $m_a$, since we cannot pair two $y_a$'s together, we must have $m_a\leq$ than the sum of the exponents of all the other variables in that term.

Often there will be a multitude of possible pairings, but the quadratic elements obtained in Lemma \ref{standard_generators} will help consider fewer. Nonetheless, as one can see in the example below, to find efficient ways to choose those monomials $M$ that will lead only to minimal generators of $I(2,\A)$, becomes a delicate technical problem.

\begin{exm}\label{example1} Let $\A=\{\ell_1,\ell_2,\ell_3,\ell_4\}\subset \mathbb K[x_1,x_2,x_3]$, with $$\ell_1=x_1,\ell_2=x_2,\ell_3=x_1+x_2,\ell_4=x_3.$$

We have $G:=G_{1,2,3}=y_2y_3+y_1y_3-y_1y_2\in I(\A)\subset {\bf S}=\mathbb K[y_1,y_2,y_3,y_4]$, and the standard generators of $I(2,\A)\subset{\bf T}=\mathbb K[t_{1,2},\ldots,t_{3,4}]$ exhibited in Lemma \ref{standard_generators} are:

\begin{eqnarray}
L_{1,2,3}&=& t_{2,3}+t_{1,3}-t_{1,2}\nonumber\\
Q_{1,2,3,4}^1&=&t_{1,2}t_{3,4}-t_{1,3}t_{2,4}\nonumber\\
Q_{1,2,3,4}^2&=&t_{1,2}t_{3,4}-t_{1,4}t_{2,3}\nonumber.
\end{eqnarray}

Let $M=y_1^{m_1}y_2^{m_2}y_3^{m_3}y_4^{m_4}$ be the monomial such that $MG\in{\bf S'}=\mathbb K[y_1y_2,\ldots,y_3y_4]$. Let $d:=m_1+m_2+m_3+m_4$. Then, $d$ must be an even number. We also must have
$$2m_1\leq d,\, 2m_2\leq d,\, 2m_3\leq d,\, 2m_4\leq d+2.$$

$\bullet$ If $m_4=0$, then $d=m_1+m_2+m_3$, with $m_1\leq m_2+m_3$, $m_2\leq m_1+m_3$, and $m_3\leq m_1+m_2$. So $M\in{\bf S'}$. Suppose $N$ is the preimage of $M$ in ${\bf T}$. Since we need a new generator, therefore different than $NL_{1,2,3}$, the only way would be to take (if possible) two distinct variables from $M$ and pair them ``differently'' with the variables in each term of $G$. Since we can suppose that $m_1\geq 1$ and $m_2\geq 1$, $$MG=(y_1y_2y_2y_3+y_1y_2y_1y_3-y_1y_2y_1y_2)\frac{M}{y_1y_2},$$ where the monomial $M/(y_1y_2)$ is assumed to be in ${\bf S'}$. But it is clear that in parenthesis we obtain $t_{1,2}L_{1,2,3}$, so no new generator.

$\bullet$ Suppose $m_4\geq 1$ and $d=2$. If $m_4=2$, then $MG$ gives the new minimal generator $$t_{2,4}t_{3,4}+t_{1,4}t_{3,4}-t_{1,4}t_{2,4}.$$ If $m_4=1$ and, say, $m_1=1$, then $MG=y_1y_4y_2y_3+y_1y_4y_1y_3-y_1y_4y_1y_2$. In the last two terms, since we cannot par $y_1$ with itself, we can only pair $y_1y_4$, $y_1y_3$, and $y_1y_2$. There are three different pairings we can do in the first term. But modulo the elements $Q_{1,2,3,4}^1$ and $Q_{1,2,3,4}^2$ we obtain $t_{1,4}L_{1,2,3}$, so no new minimal generator.

$\bullet$ Suppose $m_4\geq 1$. Suppose $d\geq 4$. Again, $M\in{\bf S'}$, and let $N$ be its preimage in ${\bf T}$. Since we need a new generator, so different than $NL_{1,2,3}$, the only way would be to take (if possible) two variables from $M$ and pair them ``differently'' with the variables in each term of $G$. From the first two bullets, the two variables  we pick should be $y_4$, and one of the other three. Since $d\geq 4$, then not all $m_1,m_2,m_3$ are zero. Suppose $m_1\geq 1$, and that we picked also $y_1$, and $M/(y_1y_4)\in{\bf S'}$. But the second bullet tells us that we do not get a new minimal generator for $I(2,\A)$.

In conclusion, $$I(2,\A)=\langle L_{1,2,3},Q_{1,2,3,4}^1,Q_{1,2,3,4}^2,t_{2,4}t_{3,4}+t_{1,4}t_{3,4}-t_{1,4}t_{2,4}\rangle.$$
\end{exm}

\subsection{The symmetric ideal of $I_{n-2}(\A)$ and Sylvester forms.} Denote $\mathbb T:=R[\ldots,t_{i,j},\ldots], 1\leq i<j\leq$, with the natural bi-grading: $\deg(x_u)=(1,0)$, and $\deg(t_{i,j})=(0,1)$. The {\em Rees algebra} of $I_{n-2}(\A)$, namely $R[I_{n-2}(\A)t]$, is isomorphic as bi-graded algebras to $\mathbb T/\mathcal I(\A,n-2)$, for some ideal $\mathcal I(\A,n-2)\subset \mathbb T$, called {\em the Rees ideal of $I_{n-2}(\A)$}, or {\em the presentation ideal of $R[I_{n-2}(\A)t]$}.

{\em The symmetric ideal of $I_{n-2}(\A)$} is the ideal of $\mathbb T$ generated by $\mathcal I(\A,n-2)_{(-,1)}$, and it will be denoted here by ${\rm sym}(I_{n-2}(\A))$.

We have that ${\rm sym}(I_{n-2}(\A))$ is generated by the linear generators obtained in Lemma \ref{standard_generators}, and from Theorem \ref{theorem1}, by the linear syzygies on the standard generators $f_{i,j}$ of $I_{n-2}(\A)$.

For any $1\leq a<b<c\leq n$ we have the standard syzygies $\ell_af_{a,b}=\ell_cf_{b,c}$, $\ell_af_{a,c}=\ell_bf_{b,c}$, and $\ell_bf_{a,b}=\ell_cf_{a,c}$, leading to
$$\underbrace{\ell_at_{a,b}-\ell_ct_{b,c}}_{A_{a,b,c}}, \underbrace{\ell_at_{a,c}-\ell_bt_{b,c}}_{B_{a,b,c}},  \underbrace{\ell_bt_{a,b}-\ell_ct_{a,c}}_{C_{a,b,c}}\in {\rm sym}(I_{n-2}(\A)).$$

\begin{prop}\label{gens_sym_ideal} The symmetric ideal ${\rm sym}(I_{n-2}(\A))$ is generated by all $L_{i_1,i_2,i_3}$, whenever $\{i_1,i_2,i_3\}$ is a circuit, and by $A_{a,b,c},B_{a,b,c},C_{a,b,c}$, for all $1\leq a<b<c\leq n$.
\end{prop}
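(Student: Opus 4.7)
By Theorem \ref{theorem1}, $I_{n-2}(\A)$ has a linear graded free resolution, so the first syzygy module of the generating set $\{f_{i,j}\}_{1\le i<j\le n}$ is concentrated in the two lowest degrees: degree $n-2$, which captures the $p(\A)$ scalar dependencies among the non-minimal generators (we use all $m=\binom{n}{2}$ generators rather than a minimal set of size $m-p(\A)$), and degree $n-1$, which houses the $2m-n-2p(\A)$ minimal linear syzygies appearing as the second map in the resolution. Translating to the bi-grading on $\mathbb T$, this says ${\rm sym}(I_{n-2}(\A))$ is generated in bi-degrees $(0,1)$ and $(1,1)$, and it suffices to verify the proposition piece-by-piece in each of these two bi-degrees.

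\textbf{Bi-degree $(0,1)$.} An element $\sum c_{i,j} t_{i,j}$ with scalar $c_{i,j}\in\mathbb K$ lies in the Rees ideal iff $\sum c_{i,j}f_{i,j}=0$ in $R_{n-2}$. Since $(I_{n-1}(\A))_{n-2}=0$, this is exactly the condition defining $\ker \phi_{\A}$ at scalar degree in Lemma \ref{kernel}. By Claim 1 of the proof of Lemma \ref{kernel}, the $\mathbb K$-space of such relations has basis $\{\widehat{\mathbf c}_{i_1,i_2,i_3}\}$, indexed by circuits $\{i_1,i_2,i_3\}$ of $\A$, and these are precisely the coefficient tuples of the $L_{i_1,i_2,i_3}$.

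\textbf{Bi-degree $(1,1)$.} Here elements are linear first syzygies $\sum h_{i,j} t_{i,j}$ with $h_{i,j}\in R_1$ and $\sum h_{i,j} f_{i,j}=0$ in $R_{n-1}$. Using Theorem \ref{theorem1}, one computes
\[
\dim_{\mathbb K}{\rm sym}(I_{n-2}(\A))_{(1,1)} = km - \dim_{\mathbb K}(I_{n-2}(\A))_{n-1} = 2m+(k-2)\,p(\A)-n.
\]
The $k\,p(\A)$ products $x_u\cdot L_{i_1,i_2,i_3}$ are $\mathbb K$-linearly independent (this is straightforward because the $L$'s are $\mathbb K$-independent and the $x_u t_{i,j}$'s form a basis of $\mathbb T_{(1,1)}$), so the bi-degree $(1,1)$ piece of the $\mathbb T$-ideal generated by the $L$'s alone has dimension exactly $k\,p(\A)$. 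What remains is to exhibit among $A_{a,b,c}, B_{a,b,c}, C_{a,b,c}$ a collection of $2m-n-2p(\A)$ elements whose classes modulo $R_1\cdot L$ are $\mathbb K$-linearly independent; this target number matches the rank of the middle free module in the minimal resolution of Theorem \ref{theorem1}. I would carry this out by stratifying the $A,B,C$'s by the rank-$2$ flats of $L(\A)$, exploiting the identities $A_{a,b,c}-A_{a,b,c'}+C_{b,c,c'}=0$ (for $a<b<c<c'$) and their symmetric variants involving $B$, which account for all the $\mathbb K$-linear dependencies among $A,B,C$'s.

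\textbf{Main obstacle.} The delicate step is the bi-degree $(1,1)$ bookkeeping: enumerating the $\mathbb K$-dependencies among the $3\binom{n}{3}$ elements $A,B,C$ and, simultaneously, tracking their overlap with $R_1\cdot L$ (which occurs exactly at the rank-$2$ flats $X$ with $|\A_X|\ge 3$, where $L$-relations from $\A_X$ absorb some of the local $A,B,C$-dependencies). Once the analysis is stratified over $L_2(\A)$, each flat contributes locally in a controlled way, in parallel with the count $p(\A)=\sum_{X\in L_2(\A)}\binom{|\A_X|-1}{2}$ established in Claim 1, and the dimensions match to complete the proof.
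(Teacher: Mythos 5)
Your setup is correct and your bi-degree $(0,1)$ analysis is essentially the paper's (it is baked into Lemma~\ref{standard_generators} and the sentence preceding the proposition). Your dimension formula $\dim_{\mathbb K}{\rm sym}(I_{n-2}(\A))_{(1,1)}=km-\dim_{\mathbb K}(I_{n-2}(\A))_{n-1}=2m-n+(k-2)p(\A)$ also checks out against the resolution in Theorem~\ref{theorem1}. However, the heart of the proposition is the bi-degree $(1,1)$ claim, and there you stop at a plan rather than a proof. You would need to show that the $A$'s, $B$'s, $C$'s together with $R_1\cdot L$ span a $\bigl(2m-n+(k-2)p(\A)\bigr)$-dimensional space, i.e.\ exhibit $2m-n-2p(\A)$ of the $A,B,C$ elements that are independent modulo $R_1\cdot L$. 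This is genuinely delicate: in addition to the purely combinatorial relations $A_{a,b,c}-A_{a,b,c'}+C_{b,c,c'}=0$ you mention, there are mixed relations tying $A,B,C$ to $R_1\cdot L$ at every rank-$2$ flat with $|\A_X|\ge 3$; for example, if $c_a\ell_a+c_b\ell_b+c_c\ell_c=0$, then
\[
\ell_a L_{a,b,c}-c_b B_{a,b,c}-c_c A_{a,b,c}=(c_a\ell_a+c_b\ell_b+c_c\ell_c)t_{b,c}=0.
\]
Your ``stratify over $L_2(\A)$'' plan would have to account for all such relations simultaneously, and you acknowledge (in ``Main obstacle'') that this bookkeeping is unfinished. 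As written, the proposal does not prove the $(1,1)$ case.

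For comparison, the paper avoids the dimension count entirely by a constructive reduction. Given a linear syzygy $\sum h_{i,j}f_{i,j}=0$ with $h_{i,j}\in R_1$, the tuple $(\widehat{h}_{i,j})$ lies in $\ker(\phi_{\A})=\Lambda_3(\A)$ by Lemma~\ref{kernel}, so after subtracting linear-form multiples of the $L$'s one may assume $h_{i,j}=\alpha_{i,j}\ell_i+\beta_{i,j}\ell_j$ with $\alpha_{i,j},\beta_{i,j}\in\mathbb K$. Since $\ell_i f_{i,j}=f_j$ and $\ell_j f_{i,j}=f_i$ and the $f_i$ are $\mathbb K$-independent, one gets the $n$ linear constraints $\sum_{u<i}\alpha_{u,i}+\sum_{v>i}\beta_{i,v}=0$, and a telescoping rewrite then expresses the remaining syzygy explicitly as a $\mathbb K$-combination of the $A_{a,b,c},B_{a,b,c},C_{a,b,c}$. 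This gives a short, direct argument where Lemma~\ref{kernel} does the real work, rather than pushing the difficulty onto an independence count that you would still have to complete.
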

\begin{proof}
Let $$\sum_{1\leq i<j\leq n}h_{i,j}f_{i,j}=0,$$ be a linear syzygy. Then the vector $(\ldots,\widehat{h}_{i,j},\ldots)\in {\rm ker}(\phi_{\A})$. By Lemma \ref{kernel} and Claim 1, this vector a combination of 3-dependencies, with coefficients linear forms. This means that this syzygy is a combination of the ``syzygies'' corresponding to the 3-dependencies, modulo $R/\langle\ell_i,\ell_j\rangle$ in each $(i,j)$ entry.

Since we already accounted for the ``syzygies'' corresponding to the 3-dependencies, we can assume that each $h_{i,j}\in \langle\ell_i,\ell_j\rangle$. Suppose for all $1\leq i<j\leq n$, $$h_{i,j}=\alpha_{i,j}\ell_i+\beta_{i,j}\ell_j, \alpha_{i,j},\beta_{i,j}\in\mathbb K.$$ These plugged back in the syzygy equation lead to a syzygy on $f_1:=f/\ell_1,\ldots,f_n:=f/\ell_n$, the generators of $I_{n-1}(\A)$

$$\sum_{i=1}^n\left(\sum_{u=1}^{i-1}\alpha_{u,i}+\sum_{v=i+1}^n\beta_{i,v}\right)f_i=0.$$ Since $\gcd(\ell_i,\ell_j)=1,i\neq j$, and since $\alpha_{i,j},\beta_{i,j}\in\mathbb K$, we have that for each $i=1,\ldots,n$,

$$\sum_{u=1}^{i-1}\alpha_{u,i}+\sum_{v=i+1}^n\beta_{i,v}=0.$$

With these equations we can rewrite the syzygy in the following way:

\begin{eqnarray}
\beta_{1,2}(\ell_2f_{1,2}-\ell_nf_{1,n})+\beta_{1,3}(\ell_3f_{1,3}-\ell_nf_{1,n})+&\cdots&+\beta_{1,n-1}(\ell_{n-1}f_{1,n-1}-\ell_nf_{1,n})+\nonumber\\
\alpha_{1,2}(\ell_1f_{1,2}-\ell_nf_{2,n})+\beta_{2,3}(\ell_3f_{2,3}-\ell_nf_{2,n})+&\cdots&+\beta_{2,n-1}(\ell_{n-1}f_{2,n-1}-\ell_nf_{2,n})+\nonumber\\
&\vdots&\nonumber\\
\alpha_{1,n}(\ell_1f_{1,n}-\ell_{n-1}f_{n-1,n})+\alpha_{2,n}(\ell_2f_{2,n}-\ell_{n-1}f_{n-1,n})+&\cdots&+\alpha_{n-2,n}(\ell_{n-2}f_{n-2,n}-\ell_{n-1}f_{n-1,n}).\nonumber
\end{eqnarray}

But this expression confirms that the syzygy can be written as a combination of the standard syzygies. So the result is shown.
\end{proof}

\begin{rem} In Theorem \ref{theorem1}, the dimension of standard syzygies equals $\displaystyle 2{{n}\choose{2}}-n-2p(\A)=n(n-2)-2p(\A)$. Since $p(\A)$ is the number of minimal linear generators of ${\rm sym}(I_{n-2}(\A))$, we obtain that the minimum number of generators of ${\rm sym}(I_{n-2}(\A))$ is $n(n-2)-p(\A)$.
\end{rem}

\subsubsection{Sylvester forms.} The {\em Sylvester forms} technique (see for example \cite[Section 2]{HoSiVa1}) is a nice way to find new elements of the Rees ideal, from old elements. This technique was successfully applied in \cite[Proposition 3.5]{GaSiTo}, to obtain all the generators of $I(\A)$, the Orlik-Terao ideal, and we will do the same to obtain elements in $I(2,\A)$, from generators of ${\rm sym}(I_{n-2}(\A))$.

Since the level of computations exceeds the plans of this article, we are just going to exemplify them for some basic situations.

\medskip

$\bullet$ Suppose $\ell_3=a_1\ell_1+a_2\ell_2$ is a 3-dependency; of course $\ell_1,\ell_2$ will form a regular sequence in $R$. If we consider $A_{1,2,3}=\ell_1t_{1,2}-\ell_3t_{2,3}=\ell_1(t_{1,2}-a_1t_{2,3})+\ell_2(-a_2t_{2,3})$ and $B_{1,2,3}=\ell_1t_{1,3}+\ell_2(-t_{2,3})$, then we have the matrix equation

$$\left[\begin{array}{l} A_{1,2,3}\\ B_{1,2,3}
\end{array}\right]=\left[
\begin{array}{cc}
t_{1,2}-a_1t_{2,3}&-a_2t_{2,3}\\
t_{1,3}&-t_{2,3}
\end{array}
\right]\cdot \left[\begin{array}{l} \ell_1\\ \ell_2
\end{array}\right].$$ Taking the determinant of the $2\times 2$ {\em content} matrix we obtain $$-t_{2,3}(t_{1,2}-a_1t_{2,3}-a_2t_{1,3})=\pm t_{2,3}L_{1,2,3}$$ as an element of $I(2,\A)$. Since $I(2,\A)$ is prime, we obtain the linear generator $L_{1,2,3}\in I(2,\A)$.

\medskip

$\bullet$ Suppose $\ell_4=a_1\ell_1+a_2\ell_2+a_3\ell_3$ is a dependency, with $\ell_1,\ell_2,\ell_3$ being linearly independent. With this dependency, and choosing $A_{1,2,3},B_{1,2,3},A_{1,2,4}$ we have the matrix equation

$$\left[\begin{array}{l} A_{1,2,3}\\ B_{1,2,3}\\A_{1,2,4}
\end{array}\right]=\left[
\begin{array}{ccc}
t_{1,2}&0&-t_{2,3}\\
t_{1,3}&-t_{2,3}&0\\
t_{1,2}-a_1t_{2,4}&-a_2t_{2,4}&-a_3t_{2,4}
\end{array}
\right]\cdot \left[\begin{array}{l} \ell_1\\ \ell_2\\\ell_3
\end{array}\right].$$ Taking the determinant of the $3\times 3$ content matrix we obtain

$$t_{2,3}\underbrace{(a_3t_{1,2}t_{2,4}+a_2t_{1,3}t_{2,4}+a_1t_{2,3}t_{2,4}-t_{1,2}t_{2,3})}_{F}\in I(2,\A).$$ So $F\in I(2,\A)$.

If $G=a_1y_2y_3y_4+a_2y_1y_3y_4+a_3y_1y_2y_4-y_1y_2y_3$ is the generators of $I(\A)$ corresponding to the given dependency, then $F$ is the preimage in ${\bf T}$ of $$y_2G=a_1(y_2y_3)(y_2y_4)+a_2(y_1y_3)(y_2y_4)+a_3(y_1y_2)(y_2y_4)-(y_1y_2)(y_2y_3).$$

\medskip

$\bullet$ Even the quadratic standard generators of $I(2,\A)$ from Lemma \ref{standard_generators} can be obtained via Sylvester forms. Suppose $\A=\{\ell_1,\ell_2,\ldots,\ell_n\}$ with $n\geq 4$. Using $A_{1,2,4}=\ell_1t_{1,2}-\ell_4t_{2,4}$ and $A_{1,3,4}=\ell_1t_{1,3}-\ell_4t_{3,4}$, and the fact that $\ell_1,\ell_4$ are linearly independent, we get the matrix equation
$$\left[\begin{array}{l} A_{1,2,4}\\ A_{1,3,4}
\end{array}\right]=\left[
\begin{array}{cc}
t_{1,2}&-t_{2,4}\\
t_{1,3}&-t_{3,4}
\end{array}
\right]\cdot \left[\begin{array}{l} \ell_1\\ \ell_4
\end{array}\right].$$ The determinant of the content matrix  is $t_{1,3}t_{2,4}-t_{1,2}t_{3,4}=-Q_{1,2,3,4}^1$.

\subsection{Comments on Cohen-Macaulayness of $OT(\A,2)$.} In \cite{PrSp} it is proven that $OT(\A)$ is (arithmetically) Cohen-Macaulay. This result is also recovered in \cite{GaSiTo}, by the means of Rees algebra, and special fiber related results. With this late approach, in the spirit of \cite{DoRaSi}, there is the hope that one can prove that $OT(2,\A)$ is also Cohen-Macaulay, at least for the case when $k=3$ (in three variables). Indeed, $I_{n-2}(\A)$ is linearly presented (from Theorem \ref{theorem1}), and it is generated by the maximal minors of a $(n-2)\times n$ matrix with linear forms entries (see the last paragraphs of the proof of \cite[Proposition 2.1]{To2}). But the ideals $I\subset A:=\mathbb K[x,y,z]$ considered in \cite{DoRaSi} are perfect ideals, causing for $A/I$ to be Cohen-Macaulay. In our situation, if $k=3$, $R/I_{n-2}(\A)$ is Cohen-Macaulay if and only if $\A$ is a rank 3 generic hyperplane arrangement (see Corollary \ref{CM}). In these conditions, by the same coding theory argument we presented in Section \ref{3vars}, one has that $$I_{n-2}(\A)=\langle x_1,x_2,x_3\rangle^{n-2}\subset R=\mathbb K[x_1,x_2,x_3].$$

The special fiber of any power $d$ of the maximal ideal ${\frak m}$ of any ring of homogeneous polynomials with coefficients in a field $\mathbb K$ is the $d$-th Veronese algebra. By \cite[Theorem 5]{Ba} (citing Gr\"{o}bner, \cite{Gr}), it is (arithmetically) Cohen-Macaulay. From this, with \cite[Theorem 4.44]{Va}, we get furthermore, that the Rees algebra of ${\frak m}^d$ is also Cohen-Macaulay. Similar argument shows that if $\A$ is any arrangement of rank 2, then the special fiber and the Rees algebra of $I_{n-2}(\A)$ are Cohen-Macaulay (see the proof of Claim 3 in Section \ref{the_proof}).

Despite that this approach leads to very special cases, we are still conjecturing that for any $\A$ of any rank $\geq 2$, $OT(2,\A)$ is Cohen-Macaulay.

\renewcommand{\baselinestretch}{1.0}
\small\normalsize 

\bibliographystyle{amsalpha}

\end{document}